\documentclass[a4paper,12pt]{article}
\usepackage{amsmath}
\usepackage{amssymb}
\usepackage{setspace}
\usepackage{fullpage}
\usepackage{bbm}
\usepackage{amsthm}
\usepackage[latin1]{inputenc}
\usepackage{xcolor}
\usepackage[normalem]{ulem}
\usepackage{hyperref}
\usepackage{rotating}
\usepackage{graphicx}
\usepackage{enumerate}
\usepackage{dsfont}
\usepackage{pdfpages} 
\usepackage{mathrsfs}

\usepackage[normalem]{ulem} 

\usepackage[
singlelinecheck=false 
]{caption}

\newtheorem{theorem}{Theorem}[section]
\newtheorem{lemma}[theorem]{Lemma}
\newtheorem*{rtheorem}{Main Theorem}
\newtheorem{corollary}[theorem]{Corollary}
\newtheorem{result}[theorem]{Result}
\newtheorem{proposition}[theorem]{Proposition}

\theoremstyle{definition}

\newtheorem{construction}[theorem]{Construction}
\newtheorem{definition}[theorem]{Definition}

\newtheorem{remark}[theorem]{Remark}
\newtheorem*{remark*}{Remark}

\newtheorem{example}[theorem]{Example}
\newtheorem*{example*}{Example}

\usepackage{pdfsync}
\def\PG{\mathrm{PG}}

\def\v{\mathbf{v}}
\def\u{\mathbf{u}}
\def\w{\mathbf{w}}
\def\e{\mathbf{e}}

\def\F{\mathbb{F}}

\title{The geometric field of linearity of linear sets}

\author{Dibyayoti Jena \and Geertrui Van de Voorde\thanks{Both authors are supported by the Marsden Fund Council administered by the Royal Society of New Zealand.}}
\date{}

\begin{document}
\maketitle
	
\begin{abstract}
If an $\F_q$-linear set $L_U$ in a projective space is defined by a vector subspace $U$ which is linear over a proper superfield of $\F_{q}$, then all of its points have weight at least $2$. 
It is known that the converse of this statement holds for linear sets of rank $h$ in $\PG(1,q^h)$ but for linear sets of rank $k<h$, the converse of this statement is in general no longer true. 

The first part of this paper studies the relation between the weights of points and the size of a linear set, and introduces the concept of the {\em geometric field of linearity} of a linear set. This notion will allow us to show the main theorem, stating that for particular linear sets without points of weight $1$, the converse of the above statement still holds as long as we take the geometric field of linearity into account.


\end{abstract}

\section{Sets and their linearity}
\subsection{Linear sets}

	Linear sets in finite projective spaces are a generalisation of subgeometries. They have attracted a lot of attention in the last few years. Their applications include construction of other mathematical objects like blocking sets \cite{polito}, translation ovoids \cite{lunardon}, KM-arcs \cite{maarten}. They have also proven useful in the study of semifields \cite{olgamichel} and rank metric codes \cite{zullo,jon,zini}. More details about linear sets can be found in \cite{wij,olga}.   

	More formally, let $\mathbb{F}_{q^h}$ denote the finite field of order $q^h$, where $q$ is a prime power, and let $V$ be an $r$ dimensional vector space over $\mathbb{F}_{q^h}$. The vector space $V$ can also be seen as a $rh$ dimensional vector space over $\mathbb{F}_q$. An $\mathbb{F}_q$-\textit{linear set $L$ of rank $k$} in $\PG(V)=\PG(r-1,q^h)$ is the set of points defined by a a $k$-dimensional $\mathbb{F}_q$-subspace $U$ of $V$ in the following way: 
	$$L=L_U=\{\langle u\rangle_{q^h}\mid u\in U\backslash\{0\}\}.$$
	Here $\langle u\rangle_{q^h}$ denotes the point of $\PG(r-1,q^h)$ determined by the vector $u$ (where $u$ is seen as a vector in $V$). If $W$ is an $\F_{q^h}$-vector subspace of dimension $s$ of $V$, then $\langle W\rangle_{q^h}$ denotes the projective $(s-1)$-dimensional subspace corresponding to $U$ in $\PG(V)$. To avoid confusion, the subspace spanned by two subspaces, say $S_1,S_2$, of $\PG(r-1,q^h)$ will be denoted by $span(S_1,S_2)$, and likewise, the vector subspace spanned by two vector spaces $V_1,V_2$ will be denoted by $span(V_1,V_2)$. 
	
	The {\em $\F_q$-weight} of a point $P=\langle u_P\rangle_{q^h}$ in a linear set $L_U$ is defined to be the vector space dimension of the $\mathbb{F}_q$-subspace $U_P$ of all the vectors determining $P$, i.e. $$U_P=\{0\}\cup\{u\mid u\in U, \langle u\rangle_{q^h}=\langle u_P\rangle_{q^h}\}.$$ 
	
We will simply use the term {\em weight} of a point if the underlying field is clear. Note that we need to specify the underlying vector space $U$ in the definition of the rank of a linear set and in the definition of the weight of a point in a linear set $L_U$; this will become more clear later in this paper.
	
	The isomorphism between $V=\mathbb{F}_{q^h}^r$ and $V=\mathbb{F}_q^{rh}$ induces a natural map $\phi$, called the \textit{field reduction map}, from $\PG(r-1,q^h)$ to $\PG(rh-1,q)$. The map $\phi$ takes points to $(h-1)$-dimensional subspaces, and in general $(n-1)$-dimensional subspaces in $\PG(r-1,q^h)$ to $(nh-1)$-dimensional subspaces of $\PG(rh-1,q)$. The images of points under $\phi$ form a \textit{Desarguesian} $(h-1)$-spread $\mathcal{D}$ of $\PG(rh-1,q)$, and every subfield $\F_{q^s}$ of $\F_{q^h}$ gives rise to a unique Desarguesian $(s-1)$-spread partitioning the elements of $\mathcal{D}$. Rank $k$ linear sets $L_U$ can be viewed geometrically as the pre-image under $\phi$ of sets of elements of $\mathcal{D}$ intersecting a fixed $(k-1)$-dimensional projective subspace $\langle U\rangle_q$ of $\PG(rh-1,q)$. The $\F_q$-weight of a point $P$, from this point of view, is one more than the dimension of $\phi(P)\cap\langle U\rangle_q$.
	It is well known (and it follows easily from the above point of view) that a Desarguesian spread is {\em normal}: if $M,N\in \mathcal{D}$, then the subspace spanned by them, $\langle M,N\rangle$ is partitioned by elements of $\mathcal{D}$.

	The original point of view on linear sets was given by \cite{lunardon} using {\em projections}. Let $\Sigma^*=\PG(k-1,q^h)$, $\Sigma=\PG(k-1,q)$ be a canonical subgeometry of $\Sigma^*$, and $\Omega=\PG(r-1,q^h)$ be a $(r-1)$-dimensional subspace of $\Sigma^*$. For a $(k-r-1)$-dimensional subspace $\Pi$ of $\Sigma^*$ disjoint from $\Sigma$ and $\Omega$, the \textit{projection map} $\mathscr{P}_{\Pi,\Omega}:\Sigma\rightarrow\Omega$ is defined by $\mathscr{P}_{\Pi,\Omega}(x)=\langle x,\Pi\rangle\cap\Omega$. The image of $\Sigma$ under  $\mathscr{P}_{\Pi,\Omega}$ is the {\em projection} of $\Sigma$ from $\Pi$ onto $\Omega$. It was proven in \cite[Theorems 1 and 2]{lunardon} that any linear set $L$ of rank $k$ in $\Omega=\PG(r-1,q^h)$ is either a canonical subgeometry of $\Omega$ or equivalent to a projection of $\Sigma=\PG(k-1,q)$, a canonical subgeometry of $\Sigma^*=\PG(r-1,q^h)$, from $\Pi$ to $\Omega$, where $\Pi$ is a $(k-r-1)$-dimensional subspace of $\Sigma^*$ disjoint from $\Sigma$ and $\Omega$. From this point of view, the $\F_q$-weight of a point $P$ is one more than the dimension of the pre-image of $P$ under the projection map (see also \cite[Proposition 2.7]{jon}).

	


	\subsection{The geometric field of linearity of a linear set}
	In this paper, we will distinguish between sets {\em being $\F_{q^s}$-linear sets}, and having {\em geometric field of linearity} $\F_{q^s}$. We will explain the reason for this distinction in detail but start with the definitions as used in this paper.
	\begin{definition} An $\F_q$-linear set $L_U$, defined by an $\F_q$-vector space  $ U$, is {\em an $\F_{q^s}$-linear set} if $U$ is an $\F_{q^s}$-vector space (i.e. if the set of vectors in $U$ is also closed under taking $\F_{q^s}$-multiples.) \end{definition}
	
	\begin{definition} A {\em strictly} $\F_{q^s}$-linear set is an $\F_{q^s}$-linear set that is not an $\F_{q^i}$-linear set for any $i>s$. 	
	\end{definition}
	We also say (as defined in \cite{classes}) that the {\em maximum field of linearity} of a strictly $\F_{q^s}$-linear set $L_U$ is $\F_{q^s}$.
	
	We introduce the following definitions in this paper:
	\begin{definition} An $\F_q$-linear set $L_U$ {\em has geometric field of linearity $\F_{q^s}$} if there exists an $\F_{q^s}$-linear set $L_W$ with $L_U=L_W$.
	\end{definition}
	
	\begin{definition} An $\F_q$-linear set $L_U$ {\em has maximum geometric field of linearity $\F_{q^s}$} if $s$ is the largest integer such that $L_U$ has geometric field of linearity $\F_{q^s}$.
	\end{definition}
	
	 It follows from the definitions that the order of the maximum geometric field of linearity is at least the order of the maximum field of linearity of that set.
	
	In many of the well-studied cases, there is no distinction between linear sets being $\F_{q^s}$-linear and having geometric field of linearity $\F_{q^s}$. In particular, we will see in Proposition \ref{irrelevant} that this is the case for linear sets of rank $h$ in $\PG(1,q^h)$. 
		For {\em simple} linear sets, the same holds true as seen in the following Remark.
		
		\begin{remark} A {\em simple} $\F_q$-linear set $L_U$ in $\PG(1,q^h)$ (or an $\F_q$-linear set of {\em class $1$} as introduced in \cite{classes}) has the property that if $L_U=L_V$, then $U=\lambda V$ for some $\lambda\in \F_{q^h}^*$. Since $U$ is $\F_{q^s}$-linear if and only if $\lambda U$ is $\F_{q^s}$-linear, it immediately follows from the definition that for simple $\F_q$-linear sets, the maximum field of linearity and the maximum geometric field of linearity coincide.
	\end{remark}


The proof of Proposition \ref{irrelevant} is essentially a corollary of the following theorem which discusses the number of directions determined by a function over a finite field. Note that the {\em graph} of a function $f$ is defined as the subspace $\{(x,f(x))\mid x\in \F_{q^h}\}$ of $(\F_{q^h})^2\cong \mathrm{AG}(2,q^h)$.

	\begin{theorem}[\cite{simeon,simeon2}]\label{th:graph1}
Let $f:\F_{q_0} \rightarrow \F_{q_0}$ be a function, $q_0$ a power of the prime $p$. Let $N$ be the number of directions determined by $f$. Let $r = p^s$ 
be maximal such that any line with a direction determined by $f$ that is incident with a point of the graph of 
$f$ is incident with a multiple of $r$ points of the graph of $f$. Then one of the following holds:
\begin{enumerate}
\item $r = 1$ and $\frac{q_0+3}{2} \leq N \leq q_0+1$;
\item $\F_r$ is a subfield of $\F_{q_0}$ and $\frac{q_0}{r} + 1 \leq N \leq \frac{q_0-1}{r-1}$;
\item $r=q_0$ and $N=1$.
\end{enumerate}
Moreover, if $r > 2$, then the graph of $f$ is $\F_r$-linear.
\end{theorem}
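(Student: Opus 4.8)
The plan is to run the standard R\'edei-polynomial argument. Attach to $f$ its graph $U=\{(a,f(a)):a\in\F_{q_0}\}\subseteq\AG(2,q_0)$ and the R\'edei polynomial
\[
R(X,Y)=\prod_{a\in\F_{q_0}}\bigl(X+aY-f(a)\bigr)\in\F_{q_0}[X,Y],
\]
which has $X$-degree and total degree $q_0$; let $D$ be the set of directions determined by $f$, so $N=|D|$. Two facts drive the proof. First, for a fixed slope $y$ we have $R(X,y)=\prod_{a}\bigl(X-(f(a)-ay)\bigr)$, a product of $q_0$ linear factors over $\F_{q_0}$ in which the multiplicity of $x$ equals the number of points of $U$ on the line of slope $y$ through $(0,x)$; hence $y$ is \emph{non-determined} exactly when the values $f(a)-ay$ are pairwise distinct, in which case $R(X,y)=X^{q_0}-X$. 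Second, the definition of $r=p^{s}$ says precisely that for every $d\in D$ each root of $R(X,d)$ has multiplicity a multiple of $r$; since $r$ is a $p$-power this makes $R(X,d)$ an $r$-th power in $\F_{q_0}[X]$, i.e.\ $R(X,d)\in\F_{q_0}[X^{r}]$.

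Write $R(X,Y)=X^{q_0}+\sum_{i\ge 0}\sigma_i(Y)X^{i}$ with $\deg\sigma_i\le q_0-i$. For $0<i<q_0$ the two facts force $\sigma_i$ to vanish at every non-determined direction, and also at every $d\in D$ whenever $r\nmid i$; hence if $r\nmid i$ and $i\ne 1$ then $\sigma_i$ vanishes on all of $\F_{q_0}$ while having degree below $q_0$, so $\sigma_i\equiv 0$. Thus the only $X$-powers surviving in $R(X,Y)$ are $X^{q_0}$, $X^{1}$, and the $X^{rj}$ with $1\le rj<q_0$; moreover $\prod_{y\notin D}(Y-y)$, of degree $q_0-N$, divides each such $\sigma_{rj}$, so $\sigma_{rj}\equiv 0$ or $rj\le N$. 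A dual count now yields the upper bounds: fixing $P\in U$, the $N$ lines through $P$ in determined directions partition $U\setminus\{P\}$, and when $r>1$ each carries $\equiv 0\pmod r$, hence at least $r$, points, so $q_0-1\ge N(r-1)$, i.e.\ $N\le(q_0-1)/(r-1)$; with the trivial $N\le q_0+1$ this settles the upper bounds in (1)--(3), and (3) ($r=q_0$) is the degenerate case where $U$ is an affine line.

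The substance is the matching lower bounds. In case (2), feeding a generic $R(X,y)$ --- a fully reducible $X^{q_0}+g(X)$ with $0\ne g\in\F_{q_0}[X^{r}]$ --- into the theory of lacunary polynomials (R\'edei; Lov\'asz--Schrijver; Blokhuis--Sz\H{o}nyi) bounds $\deg g$ below by $q_0/r$, so the top surviving exponent $rj$ above is $\ge q_0/r$, and $rj\le N$ then gives $N\ge q_0/r+1$ (which also forces $\F_r$ to be a subfield of $\F_{q_0}$). In case (1), where $r=1$ and this rigidity is unavailable, the bound $N\ge(q_0+3)/2$ is the classical ``few directions'' estimate, proved by regarding a suitable factor of $R(X,Y)$ as a curve over $\F_{q_0}$ and estimating its rational points via Hasse--Weil, resp.\ St\"ohr--Voloch. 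I expect this last step to be the main obstacle: it is the technical core of the theorem and is not reachable by the elementary coefficient bookkeeping above.

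For the ``moreover'' clause one pushes the case-(2) analysis to its equality case. When $r>2$ the rigidity is strong enough to force the surviving $X$-exponents of $R(X,Y)$ to be powers of $r$ and the coefficients $\sigma_{rj}(Y)$ to be themselves $r$-linearized, and unwinding this identifies $R(X,Y)$ as the R\'edei polynomial of a coset of an $\F_r$-subspace of $\F_{q_0}^{2}$; after normalizing $f(0)=0$ this says exactly that $f$ is $\F_r$-linear, i.e.\ that the graph of $f$ is $\F_r$-linear. The hypothesis $r>2$ is used only here: for $r=2$ the equality analysis no longer pins down the configuration --- genuinely non-linear graphs occur --- so the clause really does need $r>2$.
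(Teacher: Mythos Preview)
The paper does not prove this statement; Theorem~\ref{th:graph1} is quoted from \cite{simeon,simeon2} and used as a black box (it feeds into Result~\ref{prop22} and Proposition~\ref{irrelevant}). So there is no ``paper's own proof'' to compare against, and your proposal is not competing with anything in the present manuscript.

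That said, what you have written is a reasonable outline of the R\'edei-polynomial approach that underlies the cited references, and the structure --- vanishing of the $\sigma_i$ with $r\nmid i$, the lacunary-polynomial step to get $N\ge q_0/r+1$, and the separate treatment of $r=1$ --- is correct in spirit. Two remarks. First, the lower bound $(q_0+3)/2$ in case~(1) does not require Hasse--Weil or St\"ohr--Voloch; in \cite{simeon2} (and already in R\'edei's original work for $p$ odd) it comes out of the same lacunary-polynomial machinery, so you are overestimating the obstacle there. Second, your ``moreover'' paragraph is the genuinely delicate part: the passage from ``only $X^{r^j}$ survive'' to ``the coefficients are themselves $r$-linearised'' is exactly the contribution of \cite{simeon} over \cite{simeon2}, and your one-sentence gloss hides real work. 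If you intend this as more than a pointer to the literature, that step needs to be fleshed out; as a summary of where the argument lives, it is fine.
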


\begin{remark} The previous theorem says that if $r>2$, the point set of the graph of $f$ is $\F_r$-linear, which the author shows by showing that $f$ is an $\F_r$-linear map. It is clear that $\{(x,f(x))\mid x\in \F_{q^h}\}$ is indeed $\F_r$-linear if and only if $f$ is an $\F_r$-linear map.
\end{remark}


	In Proposition 2.2 of \cite{classes}, the following is shown, using Theorem \ref{th:graph1}:
	\begin{result}\label{prop22} Let $L_U$ be an $\F_q$-linear set of $\PG(W,q^h)=\PG(1,q^h)$ of rank $h$. The maximum field of linearity of $L_U$ is $\F_{q^s}$ where $s=\min\{w_{L_U}(P): P\in L_U\}$. 
	
		If the maximum field of linearity is $\F_q$, then the rank of $L_U$ as an $\F_q$-linear set is uniquely defined, i.e. for each $\F_q$-subspace $V$ of $W$, if $L_V=L_W$, then $\dim_q(V)=h$.
	\end{result}
	
	The proof of the following proposition goes along the same lines as the proof of Result \ref{prop22}.
	\begin{proposition}\label{irrelevant} Let $L_U$ be an $\F_q$-linear set of rank $h$ in $\PG(1,q^h)$. Then $L_U$ is a strictly $\F_{q^s}$-linear set if and only if $L_U$ has maximum geometric field of linearity $\F_{q^s}$.
	\end{proposition}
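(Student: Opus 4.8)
The plan is to show the two properties coincide by reducing to Result~\ref{prop22} and Theorem~\ref{th:graph1}. The ``only if'' direction is immediate from the general observation made in the excerpt (the order of the maximum geometric field of linearity is at least that of the maximum field of linearity): if $L_U$ is strictly $\F_{q^s}$-linear, then $L_U$ has geometric field of linearity $\F_{q^s}$, and it remains to rule out a strictly larger geometric field of linearity. So the real content is the ``if'' direction, or equivalently: if $L_U$ has maximum geometric field of linearity $\F_{q^{s'}}$ with $s'>s$, we derive a contradiction. Concretely, suppose $L_U=L_W$ where $W$ is an $\F_{q^{s'}}$-subspace and $s'$ is maximal with this property; we want to conclude that $U$ itself is already $\F_{q^{s'}}$-linear (up to the ambiguity in the rank), which forces $s=s'$.

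First I would invoke Result~\ref{prop22}: since $L_U$ has rank $h$, the maximum field of linearity $\F_{q^s}$ of $L_U$ satisfies $s=\min\{w_{L_U}(P):P\in L_U\}$, where the weight is computed with respect to $U$. The key point is that the weight function and the minimum weight are geometric invariants of the \emph{point set} $L_U$ in the following guarded sense: for a rank-$h$ linear set in $\PG(1,q^h)$, the size of $L_U$ together with Theorem~\ref{th:graph1} pins down the parameter $r=q^s$ as the largest power of $q$ dividing all the relevant secant intersection numbers, and this is manifestly independent of the representing subspace. So if $L_W=L_U$ with $W$ an $\F_{q^{s'}}$-subspace, then $W$ must also have rank $h$ (it cannot have smaller rank once the maximum field of linearity over $\F_q$ is not too large — this is exactly the uniqueness-of-rank clause in Result~\ref{prop22}, applied after passing to the appropriate base field), and applying Result~\ref{prop22} to the $\F_{q^s}$-linear set $L_W$ — now viewed over its base field — yields that the minimum weight of $L_W$ over $\F_{q^s}$ equals $s'/s$. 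But since $L_W=L_U$ and $s=\min_P w_{L_U}(P)$, there is a point $P$ of $\F_q$-weight exactly $s$ in $L_U$; this point has $\F_{q^s}$-weight exactly $1$ in $L_W$, forcing $s'/s=1$, i.e. $s'=s$.

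The main obstacle, and the step requiring the most care, is the bookkeeping around the rank when changing base fields: one must argue that the representing subspace $W$ of an $\F_{q^{s'}}$-linear set equal to $L_U$ genuinely has $\F_{q^{s'}}$-rank $h/s'$ and $\F_q$-rank $h$, so that Result~\ref{prop22} applies verbatim over $\F_{q^{s'}}$ (or over $\F_{q^s}$). This is where the uniqueness-of-rank statement in Result~\ref{prop22} does the work: because the maximum field of linearity is exactly $\F_{q^s}$ and not larger, over the field $\F_{q^s}$ the linear set $L_U=L_W$ has a point of weight $1$, hence its $\F_{q^s}$-rank is uniquely $h/s$, and in particular $W$ cannot be taken to have smaller $\F_{q^{s'}}$-rank than $h/s'$. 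I would also need the elementary remark, already noted in the excerpt, that a subspace is $\F_{q^t}$-linear if and only if any nonzero scalar multiple of it is, together with the transitivity of the ``$\F_{q^a}\subseteq\F_{q^b}$'' relation on subfields of $\F_{q^h}$, to ensure that ``maximum geometric field of linearity'' is well defined (the candidate orders form a chain). Everything else is a direct translation between the weight language and the ``number of points on a secant line'' language of Theorem~\ref{th:graph1}, exactly as in the proof of Result~\ref{prop22}.
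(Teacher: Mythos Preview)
Your overall strategy is reasonable, but the step you yourself flag as ``the main obstacle'' --- showing that any $\F_{q^{s'}}$-subspace $W$ with $L_W=L_U$ must have $\F_q$-rank $h$ --- is not handled correctly. You propose to invoke the uniqueness-of-rank clause of Result~\ref{prop22} ``after passing to the appropriate base field''. But that clause is only stated for the case where the maximum field of linearity is the base field itself. Passing to $\F_{q^s}$ would let you conclude that every $\F_{q^s}$-subspace representing $L_U$ has $\F_{q^s}$-dimension $h/s$; however $W$ is only assumed $\F_{q^{s'}}$-linear, and when $s\nmid s'$ (for instance $h=6$, $s=2$, $s'=3$) $W$ is not an $\F_{q^s}$-subspace at all, so this conclusion says nothing about $W$. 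Your subsequent sentence ``in particular $W$ cannot be taken to have smaller $\F_{q^{s'}}$-rank than $h/s'$'' is a non sequitur for the same reason. Relatedly, your remark that the candidate fields of linearity ``form a chain'' is false: the subfields of $\F_{q^h}$ correspond to divisors of $h$, which are not totally ordered in general. (The \emph{maximum} geometric field of linearity is still well defined simply because the set of admissible $s$ is finite.)

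The paper closes this gap not via uniqueness of rank but by a direct size comparison coming from Theorem~\ref{th:graph1}. Since $U$ is strictly $\F_{q^s}$-linear of rank $h$, one has $|L_U|\in[q^{h-s}+1,\ (q^h-1)/(q^s-1)]$. If $W$ is $\F_{q^{s'}}$-linear with $s'>s$ and has $\F_q$-rank $h$, the same theorem forces $|L_W|\le (q^h-1)/(q^{s'}-1)<q^{h-s}+1$, a contradiction; if $W$ has rank $>h$ then $L_W=\PG(1,q^h)$; and if $W$ has rank $<h$ then, since $s'\mid h$ and $s'\mid\dim_{\F_q}W$, the rank is at most $h-s'$, whence $|L_W|\le (q^{h-s'}-1)/(q-1)<q^{h-s}+1$, again a contradiction. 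This counting argument handles all $s'>s$ simultaneously without any divisibility hypothesis relating $s$ and $s'$. Your attempted shortcut through Result~\ref{prop22} ultimately has to be backed by exactly these size bounds, so you may as well use them directly.
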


	\begin{proof}

	Let $L_U$ be an $\F_q$-linear set of rank $h$ in $\PG(1,q^h)$, where $U$ is strictly $\F_{q^s}$-linear. Every $\F_q$-linear set of rank $h$ in $\PG(1,q^h)$ can be mapped by an element of $\mathrm{PGL}(1,q^h)$ to a linear set not containing the point $\langle (0,1)\rangle_{q^h}$, and hence, be written as a set of points $L_U$ with $U=\{(x,f(x))\mid x\in \F_{q^h}\}$ for some $\F_q$-linear map $f$ which is strictly $\F_{q^s}$-linear.
	
 Since $U$ is strictly $\F_{q^s}$-linear we have that every vector line through two vectors of $U$ contains a multiple of $q^s$ points of $U$ but there is no $i>s$ such that every vector line through two vectors of $U$ contains a multiple of $q^{i}$ points of $U$. By Theorem \ref{th:graph1} we have that the number of directions determined by $f$, and hence, the number of points in $L_U$ is contained in $[q^{h-s}+1,\frac{q^h-1}{q^s-1}]$. 
 	Assume that there exists an $L_V=L_U$ such that $V$ is $\F_{q^{s'}}$-linear for some $s'>s$.

 If the rank of $V$ is $h$, then $L_V=\{\langle (x,g(x))\rangle_{q^h}\mid x\in \F_{q^h}\}$ for some $\F_{q^{s'}}$-linear map $g$. Similarly, by Theorem \ref{th:graph1} the number of points in $L_V$ is then contained in $[q^{h-s'}+1,\frac{q^h-1}{q^{s'}-1}]$. It follows that $s=s'$, a contradiction. If the rank of $V$ is larger than $h$, then $L_V$ is the entire line $\PG(1,q^h)$, a contradiction.
	Now assume that $V$ has rank smaller than $h$. Since $V$ is $\F_{q^{s'}}$-linear, the rank of $V$ is a multiple of $s'$, so it follows that the rank of $V$ is at most $h-s'$. This means that $L_V$ has at most $\frac{q^{h-s'}-1}{q-1}$ points, and we know that the number of points in $L_U$ is contained in $[q^{h-s}+1,\frac{q^h-1}{q^s-1}]$. Since $s'>s$, this is a contradiction.
	This argument shows that if an $\F_q$-linear set $L_U$ of rank $h$ is strictly $\F_{q^s}$-linear, its maximum field of linearity is $\F_{q^s}$.
	
	Vice versa, suppose that $L_U$ is an $\F_{q}$-linear set of rank $h$ with maximum field of linearity $\F_{q^s}$, that it, such that there exists some $L_V=L_U$ where $V$ is $\F_{q^s}$-linear. The above argument shows that if $L_V$ has rank $h$, then $U$ is indeed $\F_{q^s}$-linear. So suppose that $L_V$ has rank smaller than $h$, then we know that the rank of $V$ is at most $h-s$ and the number of points in $L_V$, and hence, $L_U$ is at most $\frac{q^{h-s}-1}{q-1}$. Since $U$ has rank $h$, this implies, as above, that $U$ is (at least) $\F_{q^s}$-linear. Since the maximum field of linearity of $L_U$ is $\F_{q^s}$ it follows that $U$ is strictly $\F_{q^s}$-linear.	
	\end{proof}
The following example describes a strictly $\F_q$-linear set with a geometric field of linearity different from $\F_q$, showing that the notions of being $\F_{q^s}$-linear and having geometric field of linearity $\F_{q^s}$ do not always coincide.

\begin{example} \label{deelrechte}Consider the set $S$ of points of the subline $\PG(1,q^3)$ contained in $\PG(1,q^{6})$. The set $S$ equals $L_U$ where $U=\{(a,b)\mid a,b \in \F_{q^3}\}$ and as such, we see that $S=L_U$ is an $\F_{q^3}$-linear set of rank $2$ over $\F_{q^3}$ and an $\F_q$-linear set of rank $6$. 
				
		Now consider any vector subspace $V$ of $U$ of dimension $5$. Then $L_U=L_V$ and $L_V$ is an $\F_q$-linear set of rank $5$. 
		
		Now $V$ is not $\F_{q^i}$-linear for any $i>1$, and hence, $L_V$ is a strictly $\F_q$-linear set. But it is clear that $L_V$ behaves as an $\F_{q^3}$-linear set (as it is in fact simply a subline $\PG(1,q^3)$). And indeed, according to our definition, $L_V$ has geometric field of linearity $\F_{q^3}$.

		\end{example}

	\begin{remark}Care needs to be taken when using the terminology on the linearity of linear sets as used by Sziklai in \cite{blockingsets}: we believe that the maximum field of linearity as used by the author should correspond to our definition of maximum geometric field of linearity.
	In particular, the author conjectures in \cite{blockingsets} that a linear set with rank $t+1$ in $\PG(2,p^h)$ with ``maximum field of linearity $\F_{p^e}$'' (between quotation marks) has at least $(p^e)^t+(p^e)^{t-1}+1$ points. 	
	
	Now consider,  similar to Example \ref{deelrechte}, the point set  $L_U$ in $\PG(2,q^4)$ defined by the $\F_{q^2}$-vector space $U=\{(a,b,c)\mid a,b,c\in \F_{q^2}\}$. We see that $L_U$ determines a Baer subplane $\PG(2,q^2)$ and has $q^4+q^2+1$ points. Now $L_U=L_V$ with 
		 $V=\{(a,b,c)\mid a\in \F_q,b,c\in \F_{q^2}\}$ where $L_V$ has rank $5$ and has maximum field of linearity $\F_q$.
		 Since $q^4+q^2+1<q^4+q^3+1$ this example violates the lower bound predicted by the above conjecture. However, this linear set has geometric field of linearity $\F_{q^2}$, and we believe this is what the author intended when writing the ``maximum field of linearity''.
		 Elsewhere in the paper, the {\em (geometric) exponent} of a point $P$ (in a blocking set $B$ in $\PG(2,p^h)$) is defined as the maximum integer $e$ such that all lines through $P$ meet the set $B$ in $1\pmod{p^e}$ points. Furthermore, it is shown that for a blocking set where all points have exponent $e$, $\F_{p^e}$ is a subfield of $\F_{p^h}$ and conjectured that the set $B$ is an $\F_{p^e}$-linear set. Note that in our example above, every line through a point $P$ of $L_V$ meets $L_V$ in $1\pmod{p^2}$ points, again indicating that $\F_{p^2}$ is the intended ``maximum field of linearity'', rather than the field $\F_q$ that follows from the definition. We choose the terminology for {\em geometric} field of linearity for its similarity with the distinction Sziklai makes between the {\em algebraic} and {\em geometric} exponent of a point set.

	\end{remark}

	\subsection{The minimum size of a linear set and points of weight at least two}
	We now link the field of linearity of a linear set with the weights of its points. The following lemma is easy to see.
	
	\begin{lemma}\label{triv} Let $L_U$ be an $\F_{q^s}$-linear set, then all points have $\F_q$-weight at least $s$.
	\end{lemma}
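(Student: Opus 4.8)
The plan is to observe that the $\F_q$-weight of any point of $L_U$ is in fact the $\F_q$-dimension of an $\F_{q^s}$-subspace, hence a multiple of $s$, and in particular at least $s$.

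Concretely, fix a point $P=\langle u_P\rangle_{q^h}\in L_U$ with $u_P\in U\setminus\{0\}$, and recall that by definition the $\F_q$-weight of $P$ equals $\dim_q U_P$, where $U_P=U\cap \langle u_P\rangle_{q^h}$, viewing $\langle u_P\rangle_{q^h}$ here as the $1$-dimensional $\F_{q^h}$-subspace of $V$ spanned by $u_P$. The first step is to note that, since $\F_{q^s}$ is a subfield of $\F_{q^h}$, the $\F_{q^h}$-line $\langle u_P\rangle_{q^h}$ is in particular an $\F_{q^s}$-subspace of $V$; and by the hypothesis that $L_U$ is an $\F_{q^s}$-linear set, $U$ is an $\F_{q^s}$-subspace as well. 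Therefore $U_P$, being the intersection of two $\F_{q^s}$-subspaces of $V$, is itself an $\F_{q^s}$-subspace.

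The second step is just dimension counting: $U_P$ contains $u_P\neq 0$, so $\dim_{q^s}U_P\geq 1$, and hence $\dim_q U_P = s\cdot \dim_{q^s}U_P\geq s$, which is precisely the assertion that the $\F_q$-weight of $P$ is at least $s$ (the argument in fact shows it is always a multiple of $s$). There is no genuine obstacle; the only point requiring a little care is to phrase the weight via the intersection $U\cap\langle u_P\rangle_{q^h}$ so that closure under $\F_{q^s}$-scalars is manifest. Equivalently, one can argue directly: for every $\lambda\in\F_{q^s}$ we have $\lambda u_P\in U$ and $\langle \lambda u_P\rangle_{q^h}=\langle u_P\rangle_{q^h}$, so $U_P$ contains the $s$-dimensional $\F_q$-subspace $\{\lambda u_P:\lambda\in\F_{q^s}\}$, giving $\dim_q U_P\geq s$ at once.
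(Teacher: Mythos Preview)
Your proof is correct and essentially identical to the paper's: the paper also argues that for $\beta\in\F_{q^s}$ one has $\beta u_P\in U$ with $\langle\beta u_P\rangle_{q^h}=\langle u_P\rangle_{q^h}$, so $U_P$ is at least $s$-dimensional over $\F_q$. Your phrasing via the intersection of two $\F_{q^s}$-subspaces is a neat repackaging that additionally yields that the $\F_q$-weight is a multiple of $s$, but the underlying idea is the same.
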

	\begin{proof} The weight of $P$ is the $\F_q$-dimension of the space $$U_P=\{0\}\cup\{u\mid u\in U, \langle u\rangle_{q^h}=\langle u_P\rangle_{q^h}\}.$$ 
	
	 Since $U$ is closed under $\F_{q^s}$-multiplication, for any $u\in U$, also $\beta u\in U$ where $\beta\in \F_{q^s}$, and  since $\langle u\rangle_{q^h}=\langle \beta u\rangle_{q^h}$, we see that $U_P$ is at least $s$-dimensional.
	\end{proof}

	The converse of Lemma \ref{triv} is not true as seen from the example below.
		\begin{example} \label{deelrechte2}Consider again Example \ref{deelrechte}. We see that $S=L_U$ is an $\F_{q^3}$-linear set of rank $2$ over $\F_{q^3}$ and an $\F_q$-linear set of rank $6$. Every point of $L_U$ has weight $3$, as predicted by Lemma \ref{triv}.
				Every point of $L_V$ still has weight at least $2$, but $L_V$ is not an $\F_{q^2}$-linear, nor $\F_{q^3}$-linear set (since $V$ is a strictly $\F_q$-linear vector space of dimension $5$). However $L_V$ has geometric field of linearity $\F_{q^3}$. The main theorem of this paper will show that this behaviour is not a coincidence.		
		 
		\end{example}

		The research of this paper was originally motivated by the link with the minimum size of a linear set, which we will now introduce.
		
First note that the following lower bound immediately follows from Result \ref{th:graph1}.
	\begin{corollary} A strictly $\F_q$-linear set of rank $h$ in $\PG(1,q^h)$ has at least $q^{h-1}+1$ points.
	\end{corollary}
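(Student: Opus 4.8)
The plan is to reduce the statement to Theorem~\ref{th:graph1}, via the same graph-form normalisation used in the proof of Proposition~\ref{irrelevant}. We may assume $h\ge 2$. Let $L=L_U$ be a strictly $\F_q$-linear set of rank $h$ in $\PG(1,q^h)$; it is not all of $\PG(1,q^h)$ (it has at most $\tfrac{q^h-1}{q-1}<q^h+1$ points), so after applying a suitable element of $\mathrm{PGL}(1,q^h)$ --- which replaces $U$ by $AU$ for an invertible $\F_{q^h}$-linear map $A$, and hence preserves strict $\F_q$-linearity --- we may assume $U=\{(x,f(x))\mid x\in\F_{q^h}\}$ for an $\F_q$-linear map $f\colon\F_{q^h}\to\F_{q^h}$. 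Since $U$ is $\F_{q^i}$-linear precisely when $f$ is an $\F_{q^i}$-linear map, strictness means $f$ is $\F_q$-linear but $\F_{q^i}$-linear for no $i>1$; moreover $|L_U|$ equals the number $N$ of directions determined by $f$ (as in the proof of Proposition~\ref{irrelevant}).

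Next I would apply Theorem~\ref{th:graph1} with $q_0=q^h$ and examine the parameter $r=p^s$ it produces. A line in a direction $d$ determined by $f$ that meets the graph does so in $|U\cap\langle(1,d)\rangle_{q^h}|=q^{w}$ points, where $w\ge 1$ is the $\F_q$-weight of $\langle(1,d)\rangle_{q^h}\in L_U$ (here we use that $U$ is an $\F_q$-subspace, so such an intersection is a coset of an $\F_q$-subspace); hence all these intersection numbers are powers of $q$, and $r=q^{w_{\min}}$ where $w_{\min}=\min_{P\in L_U}w(P)\ge 1$. In particular $r$ is a power of $q$ with $r\ge q$, so we are not in case~(1) of Theorem~\ref{th:graph1}. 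I would then argue $r=q$: by Result~\ref{prop22} a strictly $\F_q$-linear set of rank $h$ in $\PG(1,q^h)$ has $w_{\min}=1$, so $r=q$; alternatively, if $r\ge q^2$ then $r>2$, and the last sentence of Theorem~\ref{th:graph1} would force the graph of $f$ to be $\F_r$-linear, i.e.\ (writing $r=q^j$, $j\ge 2$) $f$ to be $\F_{q^j}$-linear, contradicting strictness. Since $1<r=q<q^h=q_0$ we are in case~(2), which gives $N\ge \tfrac{q_0}{r}+1=q^{h-1}+1$, as required.

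I do not anticipate a genuine obstacle; the argument is a short unwinding of Theorem~\ref{th:graph1} together with the graph-form reduction already established for Proposition~\ref{irrelevant}. The only point needing a little care is the identification of $r$ --- in particular, excluding the possibility of a value $r>q$ with $\F_r$ and $\F_q$ incomparable subfields of $\F_{q^h}$ --- which is handled by the observation that the relevant line--graph intersection numbers are powers of $q$ (so $r$ is one too), combined with either Result~\ref{prop22} or the ``moreover'' clause of Theorem~\ref{th:graph1}.
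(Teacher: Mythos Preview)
Your proposal is correct and follows essentially the same route as the paper: the corollary is stated there as an immediate consequence of Theorem~\ref{th:graph1}, and the argument implicit in the proof of Proposition~\ref{irrelevant} (identifying $r$ with $q^{w_{\min}}$ via the $\F_q$-subspace structure of the line--graph intersections, then invoking strictness to force $r=q$ and land in case~(2)) is exactly what you have spelled out in detail.
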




	Recently, in \cite[Theorem 3.7]{vdv}, the lower bound $q^{h-1}+1$ for linear sets of rank $h$ in $\PG(1,q^h)$ was generalised as follows:
	\begin{result} \label{ondergrens} A linear set of rank $k$ in $\PG(1,q^h)$ containing at least one point of weight one has at least $q^{k-1}+1$ points.
	\end{result}

	\begin{example} Consider again the linear set $S$ from Example \ref{deelrechte}. We see that $S$ is a set of $q^3+1$ points, but when considering $S$ as $L_U$, then it is a linear set of rank $6$ all its points having weight at least $3$. When considering $S$ as $L_V$, where $V=\{(a,b)\mid a\in \F_q,b \in \F_{q^3}\}$, then $L_V$ is a linear set of rank $4$ with one point of weight $3$ (namely $\langle(0,1)\rangle_{q^6}$) and $q^3$ points of weight one. We see that $L_V$ indeed reaches the lower bound for linear sets of rank $4$ containing at least one point of weight $1$. This example also shows that we cannot simply remove the hypothesis that there is a point of weight one in Result \ref{ondergrens}.
	\end{example}
	
		Even though every linear set $L_V$ can be written as a linear set $L_U$ containing at least one point of weight $1$ (by taking a subspace $U$ of $V$ of the correct dimension), the statement of Result \ref{ondergrens} makes clear why we are interested in linear sets without points of weight one.

%

%
	
	We also see that for a rank $k\le h$ linear set in $\PG(1,q^h)$ to have a size lower than $q^{k-1}+1$, all the points in it must have weight at least $2$. Up to our knowledge, the only constructions of $\mathbb{F}_q$-linear sets with all points of weight at least $2$ are obtained by considering linear sets that have geometric field of linearity $\mathbb{F}_{q^s}$ (as done in Example \ref{deelrechte}). 

	It follows from Result \ref{prop22} that for linear sets of rank $k=h$ in $\PG(1,q^h)$ this is the only way to obtain sets with only points of weight at least $2$.

	But we have seen in Example \ref{deelrechte2} that we cannot hope that the converse of Lemma \ref{triv} holds true in general. We will show that the following weaker version holds for a particular class of examples: if all their points have weight more than 1, then they have geometric field of linearity $\mathbb{F}_{q^s}$ for some $s>1$.

More precisely, we will show the following (see Theorem \ref{main}).
\begin{rtheorem}
		If $L$ is a linear set of rank $k$, $4\le k\le h$, in $\PG(1,q^h)$ with one point $P$ of weight $w\ge 2$ and all other points of the same weight $k-w\ge 2$ then $L$ has geometric field of linearity $\mathbb{F}_{q^s}$ with $s\mid w$, $s>1$,  $s\mid h$ and $s\geq k-w$.
		\end{rtheorem}

	Note that we are not claiming that $\F_{q^s}$ is the {\em maximum geometric field of linearity} of the set, see Remark \ref{notmax}.
%
\section{Linear sets as projections}
\subsection{$\Pi$-lines and their type}
	Recall that a rank $k$ linear set $L$ in $\Omega=\PG(r-1,q^h)$ can be viewed as a projection of $\Sigma=\PG(k-1,q)$, a canonical subgeometry of $\Sigma^*=\PG(k-1,q^h)$, from a subspace $\Pi=\PG(k-r-1,q^h)$ disjoint from $\Sigma$ and $\Omega$. Frow now on, the notations $L,\Sigma,\Sigma^*,\Omega$ and $\Pi$ will refer to these unless stated otherwise.

	Furthermore, when we consider a point $P\in\PG(s,q)$ we will assume $P=\langle\v\rangle_{q^h}=\langle(\lambda_1,\lambda_2,\dots,\lambda_{s+1})\rangle_{q^h}$ where $\lambda_1,\dots,\lambda_{s+1}\in \mathbb{F}_q$, i.e. the vector $\v$ representing the point $P$ in the subgeometry $\Sigma$ must have all coordinates in $\F_q$.
	In what follows, all points will be considered as points in $\Sigma^*$ so we will drop the subscript $_{q^h}$ if there is no ambiguity.
	

	The following definitions will be helpful in providing more structure to the linear sets that we are going to explore. 
\begin{definition}
	A line $\ell$ of $\Sigma$ is said to be a $\Pi$\textit{-line} if its extension in $\Sigma^*$ intersects $\Pi$.	A point of $\Pi$ lying on the extension of a $\Pi$-line is a point {\em of rank $2$}.
\end{definition}

\begin{definition} (see also \cite{maartenenik})
Let $Q$ be a point of $\Pi$ of {\em rank $2$} lying on the extension of a $\Pi$-line $\ell$ containing two points $P_1=\langle\v_1\rangle$ and $P_2=\langle\v_2\rangle$, where $P_1,P_2\in \Sigma$, then $Q=\langle\v_1-\alpha \v_2\rangle$ for some $\alpha\in \F_{q^h}\setminus \F_q$. We say that $Q$ has {\em type $S_\alpha$}, where $$S_\alpha:=\left\{\frac{a\alpha+b}{c\alpha+d}\mid a,b,c,d\in\mathbb{F}_q, ad\ne bc\right\}.$$
 We also say that the line $\ell$ is of \textit{type} $S_\alpha$.	
\end{definition}
It follows from \cite[Lemma 2.2]{maartenenik} that the above notion of type is well-defined: for a point $Q$ of rank $2$, if $Q=\langle\v_1-\alpha \v_2\rangle=\langle\v'_1-\alpha' \v'_2\rangle$, then $\alpha'=\dfrac{a\alpha+b}{c\alpha+d}$ for some $a,b,c,d\in\mathbb{F}_q, ad\ne bc$.
Vice versa, if $Q=\langle\v_1-\alpha' \v_2\rangle$ with $\alpha'\in S_\alpha$, then there exist $\v_1',\v_2'$ such that $Q=\langle\v'_1-\alpha\v'_2\rangle$.


Note that the set $S_\alpha$ is simply the orbit of $\alpha$ under the natural action of $\mathrm{PGL}(2,q)$ on the elements of $\mathbb{F}_{q^h}\setminus\mathbb{F}_q$. The following lemma then easily follows.

\begin{lemma}
\label{family} If $[\F_q(\alpha):\F_q]=2$, then $|S_\alpha|=q^2-q$. If $[\F_q(\alpha):\F_q]>2$, then $|S_\alpha|=q^3-q$.
\end{lemma}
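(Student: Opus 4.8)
The plan is to use the observation made just above the statement: $S_\alpha$ is exactly the orbit of $\alpha$ under the action of $\mathrm{PGL}(2,q)$ on $\F_{q^h}\setminus\F_q$ given by $\begin{pmatrix} a & b\\ c & d\end{pmatrix}\cdot\beta=\frac{a\beta+b}{c\beta+d}$. Since $|\mathrm{PGL}(2,q)|=q(q-1)(q+1)=q^3-q$, the orbit--stabiliser theorem reduces everything to computing the order of the stabiliser $G_\alpha$ of $\alpha$, via $|S_\alpha|=(q^3-q)/|G_\alpha|$. A matrix $\begin{pmatrix} a & b\\ c & d\end{pmatrix}$ with entries in $\F_q$ and $ad\ne bc$ represents an element of $G_\alpha$ precisely when $\frac{a\alpha+b}{c\alpha+d}=\alpha$ — the denominator is automatically nonzero because $\alpha\notin\F_q$ — which I would rewrite as
\[
c\alpha^2+(d-a)\alpha-b=0 .
\]

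If $[\F_q(\alpha):\F_q]>2$, then $1,\alpha,\alpha^2$ are $\F_q$-linearly independent, so the displayed relation forces $c=0$, $a=d$ and $b=0$; hence $G_\alpha$ is trivial and $|S_\alpha|=q^3-q$. If $[\F_q(\alpha):\F_q]=2$, I would write the (irreducible) minimal polynomial of $\alpha$ over $\F_q$ as $X^2-uX-v$, so that $\alpha^2=u\alpha+v$; substituting into the relation and using that $1,\alpha$ are $\F_q$-independent turns the stabiliser condition into $a=cu+d$, $b=cv$. Thus the matrices fixing $\alpha$ are precisely the invertible ones in the set $R=\bigl\{\,dI+cM : c,d\in\F_q\,\bigr\}$, where $M=\begin{pmatrix} u & v\\ 1 & 0\end{pmatrix}$. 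Since $M$ has characteristic (hence minimal) polynomial $X^2-uX-v$, which is irreducible over $\F_q$, the $\F_q$-algebra $R=\F_q[M]$ is a field isomorphic to $\F_q[X]/(X^2-uX-v)\cong\F_{q^2}$, so it has exactly $q^2-1$ invertible elements; quotienting by the scalar matrices $\{\lambda I:\lambda\in\F_q^*\}$ gives $|G_\alpha|=(q^2-1)/(q-1)=q+1$, and therefore $|S_\alpha|=(q^3-q)/(q+1)=q^2-q$.

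I do not expect a genuine obstacle here: the argument is a routine orbit--stabiliser count once one accepts that $S_\alpha$ is the $\mathrm{PGL}(2,q)$-orbit of $\alpha$. The only points that deserve a line of verification are that the action is well defined on $\F_{q^h}\setminus\F_q$ (equivalently, that $\frac{a\beta+b}{c\beta+d}\notin\F_q$ and its denominator does not vanish whenever $\beta\notin\F_q$ and $ad\ne bc$) and the two short linear-independence arguments that pin down $G_\alpha$ in the two cases; no lengthy computation is involved.
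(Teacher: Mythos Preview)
Your argument is correct. For the case $[\F_q(\alpha):\F_q]>2$ it coincides with the paper's: both show that two distinct elements of $\mathrm{PGL}(2,q)$ cannot send $\alpha$ to the same image because that would force $\alpha$ to satisfy a nontrivial quadratic over $\F_q$, i.e.\ the stabiliser is trivial.

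For $[\F_q(\alpha):\F_q]=2$ the routes diverge slightly. The paper bypasses the stabiliser computation and argues by double inclusion that $S_\alpha=\F_q(\alpha)\setminus\F_q$: any $\alpha'\in\F_q(\alpha)\setminus\F_q$ is of the form $a\alpha+b$ with $a\neq 0$, hence lies in $S_\alpha$ (take $c=0$, $d=1$), while the reverse inclusion is immediate since each $\frac{a\alpha+b}{c\alpha+d}$ lies in $\F_q(\alpha)\setminus\F_q$. Your approach instead identifies the stabiliser in $\mathrm{GL}(2,q)$ with the units of $\F_q[M]\cong\F_{q^2}$ via the companion matrix, then divides by scalars. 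The paper's version is shorter and yields the extra piece of information that $S_\alpha$ is exactly $\F_q(\alpha)\setminus\F_q$; your orbit--stabiliser treatment is more uniform across the two cases and makes the group-theoretic reason for the factor $q+1$ transparent.
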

\begin{proof} Let $\alpha\in \F_{q^h}$ with $[\F_q(\alpha):\F_q]=2$ and let $\alpha'$ be in $\F_q(\alpha)\setminus \F_q$. Note that, since $\F_q(\alpha)$ is $2$-dimensional over $\F_q$, $|\F_q(\alpha)\setminus \F_q|=q^2-q$ and $\alpha'=a\alpha+b$, for some $a\neq 0 \in \F_q$, $b\in \F_q$. Hence, $\alpha'=\frac{a\alpha+b}{c\alpha+d}$ for $c=0$, $d=1$. Since $ad-bc\neq 0$, $\alpha'\in S_\alpha$, the first statement follows.

Secondly, let $\alpha\in \F_{q^h}$ with $[\F_q(\alpha):\F_q]>2$. Since $|\mathrm{PGL}(2,q)|=q^3-q$, $S_\alpha$ has at most $q^3-q$ elements. It is easy to see that if $\frac{a\alpha+b}{c\alpha+d}=\frac{a'\alpha+b'}{c'\alpha+d'}$ for different elements of $\mathrm{PGL}(2,q)$ defined by $\begin{bmatrix}a&b\\c&d\end{bmatrix},\begin{bmatrix}a'&b'\\c'&d'\end{bmatrix}$, then $\alpha$ satisfies a non-vanishing quadratic equation with coefficients in $\F_q$, a contradiction. Hence, $|S_\alpha|=q^3-q$. 
\end{proof}

	Note that since the sets $S_\alpha$ are orbits of elements in $\mathbb{F}_{q^h}\setminus \mathbb{F}_q$, the different sets $S_\beta$ partition $\mathbb{F}_{q^h}\setminus \mathbb{F}_q$. 
	We also see that if $\alpha'\in S_\alpha$, then $\lambda\alpha'\in S_\alpha$ for all $\lambda\in \F_q^*$. In what follows, we will consider the elements of $S_\alpha$ up scalar multiple in $\F_q^*$.

	\begin{definition} \label{cosets}
	
	Consider the cosets of $\F_q^*$ in the set $S_\alpha$ and let $\bar{S_\alpha}$ denote this set of cosets:	
	$$ \bar{S_\alpha}=\{\{\lambda\alpha'\mid \lambda \in \F_q^*\}\mid \alpha'\in S_\alpha\}.$$
	 	It follows from Lemma \ref{family} that $|\bar{S_\alpha}|=q$ if $[\F_q(\alpha):\F_q]=2$, and $|\bar{S_\alpha}|=q^2+q$ otherwise.
	 \end{definition}

	\subsection{$\Pi$-lines and points of weight at least $2$}

Recall that every $\Pi$-line in $\Sigma$ extends to a line in $\Sigma^*$ containing precisely one point of rank $2$ in $\Pi$. Vice versa, a rank $2$ point lies on the extension of a unique $\Pi$-line: since $\Pi$-lines are contained in the subgeometry $\Sigma$, we see that if (the extensions in $\Sigma^*$ of) two $\Pi$-lines intersect, they do so in $\Sigma$. We conclude that there is a bijection between the set of rank $2$ points in $\Pi$ and the set of $\Pi$-lines in $\Sigma$.

Furthermore, every $\Pi$-line $\ell$ gives rise to a unique point of weight at least $2$ in the linear set $L$: all the points of $\ell$ are projected from $\Pi$ onto the same point, say $P$, of $\Omega$. We also say that $\ell$ {\em corresponds} to the point $P$. More generally, a point of weight $w$ in the linear set that is obtained from the projection of $\Sigma$ corresponds to a $(w-1)$-dimensional subspace $H$ of $\Sigma$; all the lines in $H$ are clearly $\Pi$-lines, but we will see that not all of them have the same type.

We conclude that there is a surjective mapping from the set of $\Pi$-lines to the set of points in $L$ of weight more than $1$. For more details about relation between rank $2$ points and points of weight at least $2$, see e.g. \cite{jon}.

Finally, if $\alpha'\in S_\alpha$, then $\alpha'$ can be expressed as a rational function of $\alpha$, so $\F_q(\alpha)=\F_q(\alpha')$. This ensures that the statement of the folllowing lemma is well-defined.

\begin{lemma}{\label{lemtyp}}

Let $P=\langle\v\rangle\in\Sigma$ be a point lying on a $\Pi$-line $\ell$ of type $S_\alpha$. If $[\mathbb{F}_{q}(\alpha):\mathbb{F}_q]=2$, then $\ell$ is the unique $\Pi$-line through $P$ of type $S_\alpha$.  If $[\mathbb{F}_{q}(\alpha):\mathbb{F}_q]>2$, then there are at most $q+1$ $\Pi$-lines  of type $S_\alpha$  through $P$. Moreover, if there are $q+1$ $\Pi$-lines of type $S_\alpha$ through $P$ then for each $\gamma\in S_\alpha$, there is precisely one point $\langle\u_\gamma\rangle \in \Sigma$ such that $\langle\v-\gamma \u_\gamma\rangle\in\Pi$.

\end{lemma}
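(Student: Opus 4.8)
The plan is to count the $\Pi$-lines of type $S_\alpha$ through $P=\langle\v\rangle$ by bookkeeping, for each such line, the elements $\gamma\in S_\alpha$ that describe its rank-$2$ point relative to the fixed representative $\v$. Concretely, if $\ell$ is a $\Pi$-line through $P$ of type $S_\alpha$ and $\langle\u\rangle\in\Sigma$ is a point of $\ell$ other than $P$ (with $\u$ having all coordinates in $\F_q$), then the unique rank-$2$ point $Q\in\Pi$ on the extension of $\ell$ lies on $span(P,\langle\u\rangle)$, which corresponds to $\langle\v,\u\rangle_{q^h}$, and $Q\notin\Sigma$; hence $Q=\langle a\v+b\u\rangle$ with $a,b$ necessarily nonzero (else $Q=P$ or $Q=\langle\u\rangle$, both in $\Sigma$), so $Q=\langle\v-\gamma\u\rangle$ for a unique $\gamma=-b/a$, and $\gamma\in\F_{q^h}\setminus\F_q$ because $Q\notin\Sigma$. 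By the well-definedness of the type (using \cite[Lemma 2.2]{maartenenik}) we get $\gamma\in S_\alpha$. Conversely, if $\u$ has coordinates in $\F_q$, $\langle\u\rangle\ne P$, and $\langle\v-\gamma\u\rangle\in\Pi$ for some $\gamma\in S_\alpha$, then the line of $\Sigma$ through $P$ and $\langle\u\rangle$ is a $\Pi$-line of type $S_\alpha$ with rank-$2$ point $\langle\v-\gamma\u\rangle$. So everything is controlled by the set
$$T=\{\u\in\F_q^{k}\mid \langle\u\rangle\ne P\text{ and }\langle\v-\gamma\u\rangle\in\Pi\text{ for some }\gamma\in S_\alpha\}.$$

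The technical core is the claim that \emph{for a fixed $\gamma\in\F_{q^h}\setminus\F_q$ there is at most one $\u\in\F_q^{k}$ with $\langle\v-\gamma\u\rangle\in\Pi$}. To see this, assume $\langle\v-\gamma\u\rangle$ and $\langle\v-\gamma\u'\rangle$ both lie in $\Pi$ with $\u,\u'\in\F_q^{k}$. If these are distinct points of $\Pi$, the line joining them lies in $\Pi$ and contains $\langle(\v-\gamma\u)-(\v-\gamma\u')\rangle=\langle\u'-\u\rangle$, a point of $\Sigma$ since $\u'-\u\in\F_q^{k}\setminus\{0\}$; this contradicts $\Pi\cap\Sigma=\emptyset$. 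So the two points coincide, say $\v-\gamma\u=\rho(\v-\gamma\u')$ with $\rho\in\F_{q^h}^*$. If $\u,\u'$ were $\F_q$-linearly independent (equivalently $\F_{q^h}$-linearly independent, as their entries lie in $\F_q$), then completing $\{\u,\u'\}$ to a common $\F_q$-basis of $\F_q^{k}$ and $\F_{q^h}$-basis of $\F_{q^h}^{k}$ and comparing the ($\F_q$-valued) coordinates of $\v$ in the identity $(1-\rho)\v=\gamma(\u-\rho\u')$ forces first $\rho\in\F_q$ and then $\gamma\in\F_q$, a contradiction. Hence $\u'=\lambda\u$ with $\lambda\in\F_q^*$; feeding this back into $(1-\rho)\v=\gamma(1-\rho\lambda)\u$, if $\rho\ne1$ then $\v$ is an $\F_{q^h}$-multiple of $\u$, whence $\langle\v-\gamma\u\rangle=\langle\v\rangle=P\in\Sigma$ (using $\gamma\notin\F_q$), impossible; so $\rho=1$ and $\u=\u'$. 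In particular there is also at most one \emph{point} $\langle\u_\gamma\rangle\in\Sigma$ with $\langle\v-\gamma\u_\gamma\rangle\in\Pi$, which already yields the ``precisely one'' part of the statement once existence is in place.

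With this in hand the count is routine. The uniqueness claim says the map $\u\mapsto\gamma(\u)$ sending $\u\in T$ to the unique $\gamma$ with $\langle\v-\gamma\u\rangle\in\Pi$ is a well-defined injection $T\hookrightarrow S_\alpha$, so $|T|\le|S_\alpha|$. On the other hand, each $\Pi$-line $\ell$ through $P$ of type $S_\alpha$ contributes to $T$ exactly the $q-1$ scalar $\F_q^*$-multiples of each of its $q$ points $\ne P$ (rescaling $\u$ by $\lambda\in\F_q^*$ merely replaces $\gamma(\u)$ by $\lambda^{-1}\gamma(\u)\in S_\alpha$), i.e.\ exactly $q(q-1)=q^2-q$ vectors; distinct lines through $P$ contribute disjoint sets since they meet only in $P$; and by the converse observation in the first paragraph every element of $T$ arises from such a line. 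Hence $N\,(q^2-q)=|T|\le|S_\alpha|$, where $N$ is the number of $\Pi$-lines of type $S_\alpha$ through $P$. If $[\F_q(\alpha):\F_q]=2$ then $|S_\alpha|=q^2-q$ by Lemma \ref{family}, so $N\le1$, i.e.\ $N=1$ and $\ell$ is the unique such line. If $[\F_q(\alpha):\F_q]>2$ then $|S_\alpha|=q^3-q=(q+1)(q^2-q)$ by Lemma \ref{family}, so $N\le q+1$; and $N=q+1$ forces $|T|=|S_\alpha|$, i.e.\ $\u\mapsto\gamma(\u)$ is a bijection onto $S_\alpha$, which is exactly the statement that for every $\gamma\in S_\alpha$ there is a (unique, by the middle paragraph) point $\langle\u_\gamma\rangle\in\Sigma$ with $\langle\v-\gamma\u_\gamma\rangle\in\Pi$.

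The main obstacle is the uniqueness claim of the middle paragraph: it is the one place where $\Pi\cap\Sigma=\emptyset$ and the interplay between $\F_q$- and $\F_{q^h}$-linear independence of vectors with entries in $\F_q$ genuinely enter, and some care is needed with the degenerate subcases ($\u,\u'$ linearly dependent, $\v\parallel\u$). Everything else is either the bijective dictionary between $\Pi$-lines through $P$ of type $S_\alpha$ and a subset of $T$, or the numerology of $|S_\alpha|$ supplied by Lemma \ref{family}.
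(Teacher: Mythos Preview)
Your proof is correct and follows essentially the same strategy as the paper: build an injection from the points (or, in your version, the representing vectors) on $\Pi$-lines of type $S_\alpha$ through $P$ into $S_\alpha$, and read off the bound on the number of such lines from $|S_\alpha|$ via Lemma \ref{family}. The only cosmetic difference is that the paper passes to the quotient $\bar{S_\alpha}=S_\alpha/\F_q^*$ and works with \emph{points} $\langle\u\rangle$ mapping to cosets, whereas you stay at the level of \emph{vectors} $\u$ mapping to elements of $S_\alpha$; the factor $q-1$ from the $\F_q^*$-action cancels and both routes yield $N\le |S_\alpha|/(q^2-q)$. Your treatment of the uniqueness step (the case $R_1=R_2$ in the paper's language) is in fact more explicit than the paper's, which dispatches it with ``it easily follows''.
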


\begin{proof} 

We first show that every point different from $P$ on a $\Pi$-line through $P$ gives rise to a unique element of $\bar{S_\alpha}$, that is, to a unique coset of $\F_q^*$ in $S_\alpha$.
Consider a point $Q=\langle \u\rangle$, $Q\neq P$, in $\Sigma$ lying on a $\Pi$-line of type $S_\alpha$ through $P$, then by definition, there is some $\alpha_1\in S_\alpha$ such $R= \langle\v-\alpha_1\u\rangle$ is a point of $\Pi$. Now $\alpha_1\in S_\alpha$ lies in a unique element of $\bar{S_\alpha}$. In order to see that the map sending $Q$ to the coset of $\alpha_1$ is well-defined, note that the only vectors $\u'$ such that $Q=\langle \u'\rangle=\langle \u\rangle$ are of the form $ \u'=\lambda \u$ for some $\lambda\in \F_q^*$. The point $R$ is the unique point of $\Pi$ on the line $PQ$. We see that $R= \langle\v-\alpha_2\u'\rangle$, with $\alpha_2=\frac{1}{\lambda}\alpha_1$ and hence, $\alpha_1$ and $\alpha_2$ indeed define the same element of $\bar{S_\alpha}$. 

We will now show that the above mapping takes different points on $\Pi$-lines of type $S_\alpha$ through $P$ to different elements of $\bar{S_\alpha}$.
To this end, consider two different points, say $\langle \mathbf{u_1}\rangle$ and $\langle \mathbf{u_2}\rangle$ in $\Sigma$ and suppose that $R_1=\langle\v-\alpha'\mathbf{u_1}\rangle$ and $R_2=\langle\v-\lambda\alpha'\mathbf{u_2}\rangle$ are points of $\Pi$. If $R_1=R_2$, then  it easily follows that $\langle \mathbf{u_1}\rangle=\langle \mathbf{u_2}\rangle$, a contradiction since we started with different points in $\Sigma$. If $R_1\neq R_2$, we see that the line $R_1R_2$ intersects $\Sigma$ in the point $\langle\mathbf{u_1}-\lambda \mathbf{u_2}\rangle$, again a contradiction since $\Sigma$ and $\Pi$ are disjoint.

It follows that the number of points on $\Pi$-lines of type $S_\alpha$ through $P$ is at most $|\bar{S_\alpha}|$. Since every line through $P$ contains $q$ points, different from $P$, it follows from Definition \ref{cosets} that the number of $\Pi$-lines is at most 1 if $[\F_q(\alpha):\F_q]=2$ and at most $q+1$ if $[\F_q(\alpha):\F_q]>2$. 

Now assume that there are precisely $q+1$ $\Pi$-lines with type contained in $S_\alpha$ through $P$. Let $\beta_1,\ldots,\beta_{|\bar{S_\alpha}|}$ be a set of representatives of each coset in $\bar{S_\alpha}$. It follows from the above reasoning and the pigeonhole principle that each of the points on $\Pi$-lines through $P$ is of the form $\langle \mathbf{v}-\beta_i\mathbf{u}\rangle$ and moreover, all $\beta_i$ occur when describing these points. Now consider an element $\gamma\in S_\alpha$ and let $\beta_j$ be the coset representative of $\gamma$, then $\gamma=\lambda \beta_j$ for some $\lambda\in \F_q^*$. The statement now follows from the observation that $\langle \mathbf{v}-\gamma\mathbf{u'}\rangle$ with $\mathbf{u'}=\frac{1}{\lambda}\mathbf{u}$ is the same point as $\langle \mathbf{v}-\beta_i\mathbf{u}\rangle$.

\end{proof}	

	
	
	
\begin{lemma}{\label{lemdiv}}
	Let $H$ a $(w-1)$-dimensional subspace of $\Sigma$ corresponding to a point of weight $w\ge3$ in $\Omega=\PG(r-1,q^h)$. If $H$ contains at least $q^{w-1}+q^{w-2}+\dots+1$ $\Pi$-lines of type $S_\alpha$ then the following hold:
	\begin{itemize} \item[(i)] $H$ contains precisely $q^{w-1}+q^{w-2}+\dots+1$ $\Pi$-lines of type $S_\alpha$
	\item[(ii)] 
	
 $[\mathbb{F}_{q}(\alpha):\mathbb{F}_q]=s>2$  and  $s\mid w$.
\end{itemize}
\end{lemma}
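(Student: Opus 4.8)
The plan is to first extract the combinatorics of the set $\mathcal L$ of $\Pi$-lines of type $S_\alpha$ contained in $H$, and then to turn this into a multiplicative closure property of an $\F_q$-subspace of $\F_{q^h}$. Throughout, recall that since $H$ corresponds to a point of weight $w$, \emph{every} line of $H$ is a $\Pi$-line, so $\mathcal L$ is just the set of lines of $H$ of type $S_\alpha$. For the combinatorics I would double count flags $(P,\ell)$ with $P\in H$, $\ell\in\mathcal L$, $P\in\ell$: as each line of $\mathcal L$ carries exactly $q+1$ points of $H$, one has $(q+1)|\mathcal L|=\sum_{P\in H}\bigl|\{\ell\in\mathcal L:P\in\ell\}\bigr|$. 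If $[\F_q(\alpha):\F_q]=2$, Lemma~\ref{lemtyp} bounds each inner term by $1$, so $|\mathcal L|\le|H|/(q+1)=(q^{w}-1)/(q^{2}-1)<(q^{w}-1)/(q-1)$, contradicting the hypothesis; since $\alpha\notin\F_q$ this forces $s:=[\F_q(\alpha):\F_q]>2$. For $s>2$, Lemma~\ref{lemtyp} bounds each inner term by $q+1$, so $|\mathcal L|\le|H|=(q^{w}-1)/(q-1)$, which with the hypothesis gives part~(i); and equality in the double count forces every point of $H$ to lie on exactly $q+1$ lines of $\mathcal L$.

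For part~(ii) I would move to a vector space model. Let $\widehat H$ be the $\F_{q^h}$-vector space with $\PG(\widehat H)=H^{*}:=\langle H\rangle_{q^{h}}$, and $\widehat H_{q}\subseteq\widehat H$ the $\F_q$-subspace with $\PG(\widehat H_{q})=H$. Since $H$ spans $H^{*}$, is disjoint from $\Pi$, and every line of $H$ meets $\Pi$ after extension, projecting $H$ within $H^{*}$ from $M:=\Pi\cap H^{*}$ would collapse $H$ to a single point, which forces $M$ to be a hyperplane of $H^{*}$; write $M=\ker\lambda$ for an $\F_{q^h}$-linear functional $\lambda$ on $\widehat H$. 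Because $M\cap H=\emptyset$, the restriction $\lambda|_{\widehat H_{q}}$ is injective, so $T:=\lambda(\widehat H_{q})$ is a $w$-dimensional $\F_q$-subspace of $\F_{q^h}$. A direct computation, exactly as in the discussion following the definition of the type of a line, shows that for $\langle v_{1}\rangle,\langle v_{2}\rangle\in H$ with representatives $v_{1},v_{2}\in\widehat H_{q}$ the unique point of $\Pi$ on the extension of $\langle v_{1},v_{2}\rangle$ is $\langle\lambda(v_{2})v_{1}-\lambda(v_{1})v_{2}\rangle=\langle v_{1}-\tfrac{\lambda(v_{1})}{\lambda(v_{2})}v_{2}\rangle$, so that line has type $S_{\lambda(v_{1})/\lambda(v_{2})}$.

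Combining the two ingredients: fix $P=\langle v\rangle\in H$ with $v\in\widehat H_{q}$. By part~(i) $P$ lies on exactly $q+1$ lines of $\mathcal L$, and by Lemma~\ref{lemtyp} it lies on at most $q+1$ $\Pi$-lines of type $S_\alpha$ in $\Sigma$; so these coincide, the equality case of Lemma~\ref{lemtyp} applies, and for every $\gamma\in S_\alpha$ there is $\langle u_{\gamma}\rangle\in\Sigma$ with $\langle v-\gamma u_{\gamma}\rangle\in\Pi$. The line $\langle v,u_{\gamma}\rangle$ carries the rank-$2$ point $\langle v-\gamma u_{\gamma}\rangle$, hence is a $\Pi$-line of type $S_{\gamma}=S_\alpha$ through $P$, hence lies in $H$; so $u_{\gamma}$ may be taken in $\widehat H_{q}$, and then $\lambda(u_{\gamma})\in T$. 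Reading $\langle v-\gamma u_{\gamma}\rangle\in M=\ker\lambda$ gives $\lambda(v)=\gamma\,\lambda(u_{\gamma})$, i.e.\ $\lambda(v)\gamma^{-1}=\lambda(u_{\gamma})\in T$. Letting $\gamma$ range over $S_\alpha$ (so $\gamma^{-1}$ does too, since $S_\alpha$ is closed under inversion) and $P$ range over $H$ (so $\lambda(v)$ ranges over $T\setminus\{0\}$) yields $T\cdot S_\alpha\subseteq T$; in particular $\alpha T\subseteq T$, since $\alpha\in S_\alpha$. Hence $T$ is stable under multiplication by the field $\F_q[\alpha]=\F_q(\alpha)=\F_{q^{s}}$, so $T$ is an $\F_{q^{s}}$-vector space and $s\mid\dim_{\F_q}T=w$; with $s>2$ from the first step, this is part~(ii).

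The crux I expect is the identification of the type of a line of $H$ through the functional $\lambda$ (equivalently, through the ``direction map'' $\langle v\rangle\mapsto\langle\lambda(v)\rangle$ embedding $H$ as a subgeometry of $\PG(h-1,q)$), together with the correct use of the \emph{equality} case of Lemma~\ref{lemtyp}; once these are in place the divisibility $s\mid w$ falls out of the single relation $\alpha T\subseteq T$. The remaining points to treat carefully in a full write-up are the verification that $M=\Pi\cap H^{*}$ is a hyperplane of $H^{*}$ and the bookkeeping of which representatives must lie in $\widehat H_{q}$ rather than merely in $\widehat H$.
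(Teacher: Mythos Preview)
Your argument is correct; part~(i) and the deduction $s>2$ match the paper's double count, but for the divisibility $s\mid w$ you take a genuinely different route. The paper proceeds constructively: from any $\langle\w_1\rangle\in H$ it iterates $\langle\w_i-\alpha\w_{i+1}\rangle\in\Pi$ to produce $\w_1,\dots,\w_s$, verifies via the minimal polynomial of $\alpha$ and its companion matrix (a Singer-cycle map) that these span an $(s-1)$-space on which the process closes up, and concludes that such $(s-1)$-spaces form a (Desarguesian) spread of $H$, whence $s\mid w$. You instead linearise: the hyperplane $M=H^*\cap\Pi=\ker\lambda$ turns ``type of a line of $H$'' into a ratio $\lambda(v_1)/\lambda(v_2)$, and then saturation at every point (the equality case of Lemma~\ref{lemtyp}, combined with the observation that all $q+1$ $\Pi$-lines of type $S_\alpha$ through $P$ in $\Sigma$ already lie in $H$, so that each $u_\gamma$ lands in $\widehat H_q$) yields $\alpha T\subseteq T$ for the $w$-dimensional image $T=\lambda(\widehat H_q)$ in one stroke. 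Your route is shorter and isolates the algebraic content cleanly; the paper's route is longer but makes the Desarguesian $(s-1)$-spread of $H$ explicit, and that spread is used downstream (Remark~\ref{srem} and the proof of Proposition~\ref{coromain}). You recover the same spread as the $\lambda$-preimages of the one-dimensional $\F_{q^s}$-subspaces of $T$, so nothing is lost, but in a full write-up you should record this explicitly if you intend to feed into the later arguments.
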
	

\begin{proof} (i) We count pairs $(P,L)$ where $P$ is a point of $H$ and $L$ is a $\Pi$-line of type $S_\alpha$ through $P$. Let $a$ be the average number of $\Pi$-lines of type $S_\alpha$ through a point of $H$ and $b$ be the number of $\Pi$-lines of type $S_\alpha$. Then 
$$(q+1)b=a|H|,$$ Now $|H|=q^{w-1}+q^{w-2}+\dots+1$ and by Lemma \ref{lemtyp}, $a\leq q+1$, and hence, $b\leq q^{w-1}+q^{w-2}+\dots+1$. It follows that if $b\geq q^{w-1}+q^{w-2}+\dots+1$, then indeed $b=q^{w-1}+q^{w-2}+\dots+1$ and and every point of $H$ lies on exactly $q+1$ $\Pi$-lines of type $S_\alpha$.

(ii)
	 Lemma \ref{lemtyp}  confirms that if every point of $H$ lies on exactly $q+1$ $\Pi$-lines of type $S_\alpha$, we have that $[\mathbb{F}_{q}(\alpha):\mathbb{F}_q]=s>2$. And moreover, that for any point $P=\langle\v\rangle$ in $H$ and any $\beta\in S_\alpha$, there exists a unique point $\langle\u_\beta\rangle\in H$ such that $\langle\v-\beta \u_\beta\rangle\in \Pi$. 
	
	Now let $Q=\langle\v_1\rangle$ be an arbitrary point in $H$. Then, since $\alpha\in S_\alpha$, there exists a unique point $\langle\v_2\rangle\in H$ such that $\langle\v_1-\alpha \v_2\rangle\in \Pi$. Similarly, given $\langle \v_2\rangle$, there exists a unique point $\langle\v_3\rangle\in H$ such that $\langle\v_2-\alpha \v_3\rangle\in\Pi$ and so on. Thus, the point $Q=\langle\v_1\rangle$ gives rise in a unique way to an ordered set of points $\langle \v_2\rangle, \langle \v_3\rangle,\ldots$ such that $\langle \v_{i}-\alpha \v_{i+1}\rangle$ in $\Pi$.
	We will show that the points $\langle \v_1\rangle,\langle \v_2\rangle,\ldots$ obtained in this way are precisely the $\frac{q^{s}-1}{q-1}$ points forming an $(s-1)$-dimensional subspace of $H$.

	First note that the extension of $H$ to $\Sigma^*$ intersects $\Pi$ in a $(w-2)$-dimensional subspace $H'$ of $\Pi$. As the points $\langle\v_1-\alpha \v_2\rangle,\langle\v_2-\alpha \v_3\rangle,\dots$ lie in $H'$, there can be a maximum of $w-1$ such points whose corresponding vectors are linearly independent over $\mathbb{F}$. Suppose that the vectors $\v_1-\alpha \v_2,\v_2-\alpha \v_3,\dots, \v_{t-1}-\alpha \v_{t}$ are independent but that $ \v_t-\alpha \v_{t+1}$ is dependent of the previous vectors. This implies that the points $\langle\v_1-\alpha \v_2\rangle,\langle\v_2-\alpha \v_3\rangle,\ldots, \langle\v_{t-1}-\alpha \v_{t}\rangle$ span a $(t-2)$-dimensional subspace of $H'$ containing the point $\langle \v_t-\alpha \v_{t+1}\rangle$. 
	
	We find that 
	
	\begin{align}
	\v_{t}-\alpha \v_{t+1}=\xi_1 (\v_1-\alpha \v_2)+\dots+\xi_{t-1}(\v_{t-1}-\alpha \v_t)\label{LD}
	\end{align}
	for some $\xi_1,\dots,\xi_{t-1}\in \mathbb{F}_{q^h}$. 
	It follows that $\v_{t+1}$ is an $\F_{q^h}$-linear combination of $\v_1,\v_2,\ldots,\v_{t}$ but since these vectors have all have entries in $\F_q$, we have that $\v_{t+1}=\lambda_1\v_1+\lambda_2\v_2+\dots+\lambda_{{t}}\v_{t}$ for some $\lambda_1,\dots,\lambda_t\in\mathbb{F}_q$.
	Note that the vectors $\v_1,\dots,\v_{t}$ are linearly independent: if they were dependent, the points $\langle\v_1\rangle,\dots,\langle\v_{t}\rangle$ would be contained in a subspace of dimension $t-2$ of $H$, which in turns yields that the points $\v_1-\alpha \v_2,\v_2-\alpha \v_3,\dots, \v_{t-1}-\alpha \v_t$ are contained in a $(t-3)$-dimensional subspace of $H'$, a contradiction.

From \eqref{LD} we find that
	\begin{align*}
		-\alpha\lambda_1&=\xi_1\\
		-\alpha\lambda_2&=\xi_2-\alpha\xi_1\\
		-\alpha\lambda_3&=\xi_3-\alpha\xi_2\\
		\vdots\\
		-\alpha\lambda_{t-1}&=\xi_{t-1}-\alpha\xi_{t-2}\\
		1-\alpha\lambda_t&=-\alpha\xi_{t-1}.
	\end{align*}
	Eliminating $\xi_1,\ldots,\xi_{t-1}$ from the above system of equations yields that $\alpha$ must satisfy the equation
	$$\alpha^{t}\lambda_1+\alpha^{t-1}\lambda_2+\dots+\alpha \lambda_{t}=1.$$
	As $\mathbb{F}_q(\alpha)=\mathbb{F}_{q^s}$ is a subfield of $\mathbb{F}_{q^h}$, and as $\alpha$ satisfies a degree $t$ polynomial, we must have $ s\le t\le w$ and $s\mid h$.

	We now show that $s=t$. In order to do so, let the minimal polynomial of $\alpha\in \mathbb{F}_{q^h}$ be \begin{align}\alpha^{s}\nu_1+\alpha^{s-1}\nu_2+\dots+\alpha \nu_{s}=1.\label{min2}\end{align}
	Consider a point $Q=\langle \w_1\rangle$ of $H$, then we have seen before that there are unique points $\langle \w_2\rangle,\ldots,\langle \w_s\rangle$  such that $\langle \w_1-\alpha \w_2\rangle, \langle \w_2-\alpha \w_3\rangle,\dots, \langle \w_{s-1}-\alpha \w_{s}\rangle$ are points of $H'$. Furthermore, the above argument shows that $\langle \w_1\rangle,\ldots,\langle \w_s\rangle$ span an $(s-1)$-dimensional subspace of $H$.
	Denote this $(s-1)$-space by $H_Q$.

	Note that
	\begin{align*}&\langle \w_1-\alpha\w_2+\alpha(\w_2-\alpha \w_3)+\alpha^2(\w_3-\alpha \w_4)+\ldots+\alpha^{s-2}(\w_{s-1}-\alpha \w_s)\rangle\\&=\langle \w_1-\alpha^{s-1} \w_s\rangle\end{align*} is a point of $H'$. Similarly, it easily follows that the points $\langle \w_2-\alpha^{s-2} \w_s\rangle,\dots, \langle \w_{s-1}-\alpha \w_{s}\rangle$ also lie in $H'$. This implies that the point
	
	\begin{align*}
		&\langle \w_{s}-\alpha({\nu_1\w_1+\nu_2\w_2+\dots+\nu_{s}\w_{s}})\rangle\\
		=&\langle(\alpha^{s}\nu_1+\alpha^{s-1}\nu_2+\dots+\alpha \nu_{s})\w_{s}-\alpha({\nu_1\w_1+\nu_2\w_2+\dots+\nu_{s}\w_{s}})\rangle\\
		=&\langle \alpha\nu_1(\alpha^{s-1}\w_{s}-\w_1)+\alpha\nu_2(\alpha^{s-2}\w_s-\w_2)+\dots+\alpha\nu_{s-1}(\alpha \w_s-\w_{s-1})\rangle
	\end{align*}
	is also contained in $H'$, where we have used \eqref{min2} in the first equality. Since $$\langle \w_{s}-\alpha({\nu_1\w_1+\nu_2\w_2+\dots+\nu_{s}\w_{s}})\rangle\in H'$$ and the point $ \langle \w_{s+1}\rangle$ such that $\langle \w_s-\alpha \w_{s+1}\rangle\in \Pi$ is unique, we obtain that $$\langle \w_{s+1}\rangle=\langle{\nu_1\w_1+\nu_2\w_2+\dots+\nu_{s}\w_{s}}\rangle,$$ which is clearly contained in $H_Q$.

	Recall that $ \w_1, \w_2,\ldots, \w_s$ forms a basis for the $s$-dimensional $\F_q$-vector space determining $H_Q$. Hence, there is a unique $\F_q$-linear map $\phi$ which satisfies $\phi(\w_i)=\w_{i+1}$ for $i=1,\ldots,s-1$ and $\phi(\w_{s})=\nu_1\w_1+\nu_2\w_2+\dots+\nu_{s}\w_{s}$. With respect to the basis $\w_1,\ldots,\w_s$, this map is represented by the matrix 
	
	\begin{align*}
		A=
		\begin{bmatrix}
			0&0&\cdots&0&\nu_1\\
			1&0&\cdots&0&\nu_2\\
			0&1&\cdots&0&\nu_3\\
			\vdots&\vdots&\ddots&\vdots&\vdots\\
			0&0&\cdots&1&\nu_{s}
		\end{bmatrix}.
	\end{align*}

From the definition of $\phi$, we see that for $1\leq i\leq s$, $\langle \w_i-\phi(\w_i)\rangle$ is a point of $\Pi$. But since $\phi$ is an $\F_q$-linear map, we have that for a point $\w=\mu_1\w_1+\dots+\mu_{s}\w_{s}$, with $\mu_i\in \F_q$, 

\begin{align*}\langle \w-\phi(\w)\rangle&=
 \langle \mu_1\w_1+\dots+\mu_{s}\w_{s}-\phi(\mu_1\w_1+\dots+\mu_{s}\w_{s})\rangle\\
  &=\langle \mu_1\w_1+\dots+\mu_{s}\w_{s}-\mu_1\phi(\w_1)-\dots-\mu_{s}\phi(\w_{s}))\rangle\\
   &= \langle \mu_1(\w_1-\phi(\w_1))+\ldots+\mu_{s}(\w_{s}-\phi(\w_{s}))\rangle
 \end{align*}
 is a point of $\Pi$ too.
	In other words, for each point $\langle \w\rangle\in H_Q$, $\langle \phi(\w)\rangle$ is the unique point $\w'$ such that $\langle \w-\alpha\w'\rangle$ in $\Pi$. Since $\phi(H_Q)=H_Q$ we are done.
	(In fact, $A$ is the companion matrix of the minimal polynomial for $1/\alpha$ and the map $\phi$ generates a Singer cycle on the points of $H_Q$).
	
	 It is clear from the construction that if for two points $Q,R\in H$, the spaces $H_Q$ and $H_R$ have a point in common, then $H_Q$ and $H_R$ coincide. Moreover, since every point $S$ lies determines a subspace $H_S$, we find that $H$ is partitioned by the $(s-1)$-spaces $H_P$,  $P\in H$. The existence of this $(s-1)$-spread	 in the $(w-1)$-space $H$ shows that $s\mid w$.

\end{proof}

\begin{remark}\label{srem} It follows from the construction of Desarguesian spreads from Singer cycles that the $(s-1)$-spread of $H$, constructed in Lemma \ref{lemdiv}, is a Desarguesian spread (see e.g. \cite{dye,spreads}). The fact that the spread is normal (which also implies Desarguesian if $w>2s$) is easy to see geometrically: 	the extension of two different $(s-1)$-spread elements $H_P$ and $H_Q$ of $H$ intersect $\Pi$ in two disjoint $(s-2)$ subspaces, say $H_P'$ and $H_Q'$. As for any point $R \in span(H_P,H_Q)$ the space $H_R'$ in $\Pi$ is a subspace of $span(H_P',H_Q')$, the spread element $H_R$ must be in $span(H_P,H_Q)$. 

%
\end{remark}

	\subsection{Constructing $\Pi$-lines}
	Lemma \ref{lemdiv} tells us what happens when there are many $\Pi$-lines of the same type. The following lemma provides a way to find such lines. The first part of this lemma was already proven  for linear sets in $\PG(1,q^5)$ in \cite{maartenenik}.

\begin{lemma}{\label{lemreg}}
	If there are three disjoint $\Pi$-lines $\ell_1,\ell_2$ and $\ell_3$ in a $3$-dimensional subspace $H$ of $\Sigma$, whose extensions in $\Sigma^*$ are contained in a line $M$ of $\Pi$, then all the lines in the unique regulus $\mathcal{R}$ defined by $\ell_1,\ell_2,\ell_3$ are $\Pi$-lines of the same type.
	
	Moreover if there exists one more line $\ell$ in $H$, disjoint from all the lines of $\mathcal{R}$, whose extension in $\Sigma^*$ intersects $M$, then $H$ is partitioned by $\Pi$-lines of the same type $S_\alpha$ with $[\mathbb{F}_q(\alpha):\mathbb{F}_q]=2$.  
	
	Vice versa, if a $3$-space $H$ contains $3$ disjoint $\Pi$-lines $\ell_1$, $\ell_2$, $\ell_3$ of the same type $S_\alpha$ whose extensions in $\Sigma^*$ are contained in a line $M$ of $\Pi$, and $[\mathbb{F}_q(\alpha):\mathbb{F}_q]=2$, then the space $H$ is partitioned by $\Pi$-lines of type $S_\alpha$. 
\end{lemma}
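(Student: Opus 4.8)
The plan is to fix coordinates on the $3$-space $H$ adapted to $\ell_1,\ell_2,\ell_3$, and then to deduce both ``partition'' conclusions from a count of the rank-$2$ points lying on $M$. Since $\ell_1,\ell_2,\ell_3$ are pairwise skew lines of $H\cong\PG(3,q)$, after a projectivity of $\Sigma^*$ fixing $\Sigma$ setwise (which alters $\Pi$ but not the truth of the statement) I may assume $\ell_1=\langle \e_0,\e_1\rangle$, $\ell_2=\langle \e_2,\e_3\rangle$, $\ell_3=\langle \e_0+\e_2,\e_1+\e_3\rangle$, so that the regulus they determine is $\mathcal{R}=\{\ell_t=\langle \e_0+t\e_2,\ \e_1+t\e_3\rangle:t\in\F_q\cup\{\infty\}\}$, with $\ell_0=\ell_1,\ \ell_\infty=\ell_2,\ \ell_1=\ell_3$. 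Let $R_i$ be the point in which the extension of $\ell_i$ meets $\Pi$; since $R_i\notin\Sigma$ we have $R_1=\langle \e_0-\alpha_1\e_1\rangle$, $R_2=\langle \e_2-\alpha_2\e_3\rangle$, $R_3=\langle(\e_0+\e_2)-\alpha_3(\e_1+\e_3)\rangle$ with $\alpha_i\in\F_{q^h}\setminus\F_q$, and $\ell_i$ has type $S_{\alpha_i}$. The hypothesis that $R_1,R_2,R_3$ lie on a common line $M$ of $\Pi$ says that the vector of $R_3$ is an $\F_{q^h}$-combination of those of $R_1$ and $R_2$; comparing the four coordinates forces $\alpha_1=\alpha_2=\alpha_3=:\alpha$ and $M=\{\langle\lambda(\e_0-\alpha\e_1)+\mu(\e_2-\alpha\e_3)\rangle:(\lambda:\mu)\in\PG(1,q^h)\}$. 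For each $t\in\F_q\cup\{\infty\}$ the point $\langle(\e_0+t\e_2)-\alpha(\e_1+t\e_3)\rangle=\langle(\e_0-\alpha\e_1)+t(\e_2-\alpha\e_3)\rangle$ lies on $M\subseteq\Pi$, so $\ell_t$ is a $\Pi$-line, and since its two base points lie in $\Sigma$ it has type $S_\alpha$; this establishes the first assertion of the lemma.

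The technical heart, and the step I expect to be the main obstacle, is a count of the rank-$2$ points of $M$ with respect to $H$, which I would isolate as an auxiliary lemma. A point $\langle v\rangle$ with $v\in\F_{q^h}^n$ is a rank-$2$ point (i.e.\ it lies on the extension of a line of $H$) if and only if the $\F_q$-span of the coordinates of $v$ is $2$-dimensional; this uses the elementary fact that the least dimension of an $\F_q$-subspace whose $\F_{q^h}$-span contains $v$ equals the $\F_q$-dimension of the span of the coordinates of $v$, together with the remark that this dimension is $1$ exactly when $\langle v\rangle\in\Sigma$. Applying the criterion to a point of $M$: for $\lambda\neq0$ normalise $\lambda=1$, so that the relevant span is $\langle1,\alpha,\mu,\alpha\mu\rangle_{\F_q}$, which contains $\langle1,\alpha\rangle_{\F_q}$ and equals it iff both $\mu$ and $\alpha\mu$ lie in $\langle1,\alpha\rangle_{\F_q}$ (the point with $\lambda=0$, namely $R_2$, is always a rank-$2$ point). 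If $[\F_q(\alpha):\F_q]>2$ then $\alpha^2\notin\langle1,\alpha\rangle_{\F_q}$, and writing $\mu=a+b\alpha$ one sees that $\alpha\mu\in\langle1,\alpha\rangle_{\F_q}$ forces $b=0$; hence the rank-$2$ points of $M$ are exactly the $q+1$ points $R_t$, $t\in\F_q\cup\{\infty\}$. If $[\F_q(\alpha):\F_q]=2$ then $\langle1,\alpha\rangle_{\F_q}=\F_q(\alpha)$ is a field, so the condition reduces to $\mu\in\F_q(\alpha)$; thus $M$ carries exactly $q^2+1$ rank-$2$ points, forming a subline $\PG(1,q^2)$, and each of them may be written $\langle v_0+\alpha v_1\rangle$ with $v_0,v_1\in\F_q^4$, so the unique $\Pi$-line through it is the line $\langle v_0,v_1\rangle$ of $H$, of type $S_\alpha$.

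Assume now that $[\F_q(\alpha):\F_q]=2$. Since two $\Pi$-lines sharing a rank-$2$ point of $\Pi$ coincide, distinct rank-$2$ points of $M$ give rise to distinct $\Pi$-lines, so exactly $q^2+1$ $\Pi$-lines of $H$ have their extension meeting $M$, all of type $S_\alpha$; I claim they are pairwise disjoint. If two of them met in a point $P$, then their extensions would span a plane $\sigma$, and since $\sigma\cap\Sigma$ contains two intersecting lines, $\sigma$ would be the extension $\overline{\tau}$ of a plane $\tau$ of $\Sigma$; as $M\subseteq\Pi$ is disjoint from $\Sigma$, the line $M$ would lie in $\overline{\tau}$ and miss $\tau$. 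Then the extensions of all $q^2+q+1$ lines of $\tau$ would meet $M$ in pairwise distinct points (two such extensions meet only in a point of $\tau$, which is disjoint from $M$), so $M$ would carry at least $q^2+q+1$ rank-$2$ points, contradicting the count $q^2+1$. Hence the $q^2+1$ $\Pi$-lines of type $S_\alpha$ meeting $M$ are pairwise disjoint, and being $q^2+1$ in number they form a line spread of $H$ — the required partition.

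It remains to obtain the hypothesis $[\F_q(\alpha):\F_q]=2$ in the two partition statements. For the converse direction it is assumed outright, and since $\ell_1$ has type $S_\alpha$ the common value $\alpha_1=\alpha_2=\alpha_3$ produced above lies in $S_\alpha$ and hence generates the same quadratic field, so the auxiliary lemma and the disjointness argument conclude. For the ``moreover'' part, let $\ell$ be the additional line. Being skew to $\ell_1=\langle\e_0,\e_1\rangle$ and to $\ell_2=\langle\e_2,\e_3\rangle$, it is the graph of an $\F_q$-linear isomorphism $\langle\e_0,\e_1\rangle\to\langle\e_2,\e_3\rangle$, say $\ell=\langle\e_0+a\e_2+b\e_3,\ \e_1+c\e_2+d\e_3\rangle$ with $\left(\begin{smallmatrix}a&c\\b&d\end{smallmatrix}\right)$ invertible over $\F_q$, and a direct computation shows that the extension of $\ell$ meets $M$ precisely when $c\alpha^2-(a-d)\alpha-b=0$. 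If $c=0$ then $\alpha\notin\F_q$ forces $a=d$ and $b=0$, whence $\ell=\langle\e_0+a\e_2,\ \e_1+a\e_3\rangle=\ell_a\in\mathcal{R}$, contradicting that $\ell$ is disjoint from every line of $\mathcal{R}$. Therefore $c\neq0$, so $\alpha$ satisfies a polynomial of degree $2$ over $\F_q$ and $[\F_q(\alpha):\F_q]=2$; the auxiliary lemma and the disjointness argument then give the desired partition of $H$ into $\Pi$-lines of type $S_\alpha$.
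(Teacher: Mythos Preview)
Your proof is correct, and the first paragraph (coordinatising the regulus and forcing $\alpha_1=\alpha_2=\alpha_3$) is essentially the same as the paper's. The divergence comes in how the partition of $H$ is obtained in the quadratic case.

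The paper proves the partition \emph{constructively}: writing the minimal polynomial of $\alpha$ as $a\alpha^2+b\alpha-1=0$, it exhibits for every point $\langle\u'\rangle=\langle\nu_1\u_1+\nu_2\v_1+\nu_3\u_2+\nu_4\v_2\rangle$ of $H$ an explicit partner $\langle\v'\rangle$ with $\langle\u'-\alpha\v'\rangle\in M$, and then invokes Lemma~\ref{lemtyp} (uniqueness of the $\Pi$-line of type $S_\alpha$ through a point when $[\F_q(\alpha):\F_q]=2$) to conclude that these lines form a spread. You instead prove it by a \emph{counting argument}: using the criterion that a point of $\Sigma^*$ has rank $2$ exactly when the $\F_q$-span of its coordinates is $2$-dimensional, you show that $M$ carries precisely $q^2+1$ rank-$2$ points (forming a $\PG(1,q^2)$-subline) when $[\F_q(\alpha):\F_q]=2$, and then rule out two of the corresponding $\Pi$-lines meeting by observing that a plane $\tau$ of $\Sigma$ whose extension contains $M$ would force at least $q^2+q+1$ rank-$2$ points on $M$.

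Both arguments are clean. Your approach is more conceptual --- it explains structurally where the number $q^2+1$ comes from and avoids Lemma~\ref{lemtyp} --- while the paper's approach is constructive: the explicit map $\u'\mapsto\v'$ is a Singer-type action, the same device used in the proof of Lemma~\ref{lemdiv} to build the Desarguesian $(s-1)$-spread inside $H$. So the paper's version dovetails with what follows, whereas yours stands on its own. One minor remark: in your ``moreover'' computation you should perhaps note that ``disjoint from all lines of $\mathcal{R}$'' in particular forces $\ell\notin\mathcal{R}$, which is what actually contradicts $\ell=\ell_a$; and in the converse you are silently replacing the given $\alpha$ by the specific representative $\alpha_1\in S_\alpha$ produced by your coordinates, which is harmless since $\F_q(\alpha_1)=\F_q(\alpha)$.
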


\begin{proof} 	
	Let $\ell_1,\dots,\ell_{q+1}$ be the $q+1$ lines of the regulus $\mathcal{R}$ in $H$ defined by $\ell_1,\ell_2,\ell_3$. Assume that the extensions of $\ell_1,\ell_2,\ell_3$, say $\bar{\ell_1},\bar{\ell_2},\bar{\ell_3}$, meet the line $M$ of $\Pi$. 
	Let $\mathcal{R}'$ be the $q^h+1$ lines of the regulus defined by $\bar{\ell_1},\bar{\ell_2},\bar{\ell_3}$ and let $m_1,m_2,m_3$ be transversals to $\mathcal{R}$. We then see that $\bar{m_1},\bar{m_2},\bar{m_3}$ are transversals to $\mathcal{R}'$ in $\Sigma$. Furthermore, $M$ intersects $\bar{\ell_1},\bar{\ell_2},\bar{\ell_3}$ so $M$ is a transversal to $\mathcal{R}'$ too. Since $\ell_i$, $1\leq i\leq q+1$ intersects $m_1,m_2,m_3$ we have that $\bar{\ell_i}$ intersects $\bar{m_1},\bar{m_2},\bar{m_3}$,  so $\bar{\ell_i}$ is an element of $\mathcal{R}'$. Since $M$ is a transversal to $\mathcal{R}'$, we conclude that all lines $\bar{\ell_i}$ intersect the line $M$.
	
	Now consider the points $\ell_i\cap m_1=P_i,\ell_i\cap m_2=Q_i$ and $\bar{\ell_i}\cap M=S_i$, $i=1,\ldots,q+1$.

Let $P_1=\langle \u_1\rangle$, $Q_1=\langle \v_1\rangle$, $P_2=\langle \u_2\rangle$, $Q_2=\langle \v_2\rangle$. Note that $\u_1,\u_2,\u_3,\u_4$ are linearly independent over $\F_{q^h}$. Without loss of generality (since $\mathrm{PGL}(4,q)$ acts transitively on the frames of $H$), we may assume that the point $\langle \u_1+\u_2+\v_1+\v_2\rangle$ is a point of $\ell_3$. It then follows that the points $P_i$, $i=3,\ldots,q+1$ are of the form $\langle \u_1+\lambda_i\u_2\rangle$ and $Q_i$ of the form $\langle \v_1+\lambda_i\v_2\rangle$ where $\lambda_i \in \F_q^*$.
We have that $S_1=\langle \u_1-\alpha\v\rangle$ for some $\alpha\in \F_{q^h}\setminus \F_q$ and $S_2=\langle \u_1-\alpha'\v_2\rangle$ for some $\alpha'\in \F_{q^h}\setminus \F_q$.  Expressing that $\bar{\ell_i}$ intersects $m_1,m_2$ and $M$ yields that there are $\xi_1,\xi_2,\psi\in \F_{q^h}$ such that
$$\xi_1(\u_1-\alpha\v_1)+\xi_2(\u_2-\alpha'\v_2)=\u_1+\lambda_i \u_2-\psi_i(\v_1+\lambda_i\v_2).$$

Since $\u_1,\u_2,\u_3,\u_4$ are linearly independent over $\F_{q^h}$, it follows that
$\xi_1=1$, $\xi_2=\lambda_i$, $\psi_i=\alpha$ and $\alpha'=\alpha$. It follows that $\psi_i=\alpha'=\alpha$. So all lines $\ell_i$ are of the same type $S_{\alpha}$.

%
%


Now assume that there is an additional line $\ell$ in $\Sigma$, not contained in $\mathcal{R}$, whose extension $\bar{\ell}$ contains a point of $M$; it follows from the first part that $\ell$ is a $\Pi$-line of type $S_\alpha$. Thus there exist points ${\langle\u\rangle}$ and $\langle\v\rangle$ in $\ell$ with $\u=\lambda_1\u_1+\lambda_2\v_1+\lambda_3\u_2+\lambda_4\v_2$, $\v=\mu_1\u_1+\mu_2\v_1+\mu_3\u_2+\mu_4\v_2$ such that $\langle\u-\beta\v\rangle\in M$ i.e. 	\begin{equation}\label{eq1}
		\u-\alpha \v=\xi_1( \u_1-\alpha \v_1)+\xi_2 (\u_2-\alpha \v_2).
	\end{equation}
	Solving (\ref{eq1}) by equating the coefficients of $\u_1,\u_2,\v_1$ and $\v_2$, we get
	\begin{equation}\label{eq2}
		\mu_1\alpha^2+(\mu_2-\lambda_1)\alpha-\lambda_2=0
	\end{equation}
	and
	\begin{equation}\label{eq3}
		\mu_3\alpha^2+(\mu_4-\lambda_3)\alpha-\lambda_4=0.
	\end{equation}
	As $\alpha\notin\mathbb{F}_q$, $\alpha$ satisfies a degree 2 equation given by (\ref{eq2}) or (\ref{eq3}). Thus $[\mathbb{F}_q(\alpha):\mathbb{F}_q]=2$. 
	
	Now let the minimal polynomial of $\alpha$ over $\mathbb{F}_q$ be 
	\begin{equation}\label{eq4}
		a\alpha^2+b\alpha-1=0.
	\end{equation}
	As the equations (\ref{eq2}), (\ref{eq3}) and (\ref{eq4}) are multiples of each other, we must have $\mu_1=a\lambda_2,\mu_2=b\lambda_2+\lambda_1,\mu_3=a\lambda_4$ and $\mu_4=b\lambda_4+\lambda_3$. 
	
	We now see that a point  $\langle\u'\rangle$ with $\u'=\nu_1\u_1+\nu_2\v_1+\nu_3\u_2+\nu_4\v_2$ in $H$ lies on a line of type $S_\alpha$: consider the point $\langle\v'\rangle$ with $\v'=a\nu_2\u_1+(b\nu_2+\nu_1)\v_1+a\nu_4\u_2+(b\nu_4+\nu_3)\v_2$, then

	\begin{align*}
		\langle\u'-\alpha\v'\rangle=&\langle\nu_1(\u_1-\alpha\v_1)+\nu_2((1-b\alpha)\v_1-a\alpha\u_1)\\
		&+\nu_3(\u_2-\alpha\v_2)+\nu_4((1-b\alpha)\v_2-a\alpha\u_2)\rangle\\
		=&\langle\nu_1(\u_1-\alpha\v_1)+\nu_2(a\alpha^2\v_1-a\alpha\u_1)\\
		&+\nu_3(\u_2-\alpha\v_2)+\nu_4(a\alpha^2\v_2-a\alpha\u_2)\rangle\\
		=&\langle(\nu_1-a\alpha\nu_2)(\u_1-\alpha\v_1)+(\nu_3-a\alpha\nu_4)(\u_2-\alpha\v_2)\rangle\in M.
	\end{align*}
	Hence every point in $H$ is contained in a $\Pi$-line of type $S_\alpha$. By Lemma \ref{lemtyp} every point can lie on at most one $\Pi$-line of type $S_\alpha$. Therefore $H$ must be partitioned by $\Pi$-lines of type $S_\alpha$.


Vice versa, suppose now that the $3$-space $H$ contains $3$ disjoint $\Pi$-lines $\ell_1$, $\ell_2$, $\ell_3$ of the same type $S_\alpha$ with $[\mathbb{F}_q(\alpha):\mathbb{F}_q]=2$ such that their extension meets $\Pi$ in $M$. It follows from the first part of the proof that we can take points $\langle \u_1\rangle$, $\langle \v_1\rangle$ on $\ell_1$ and $\langle \u_2\rangle,\langle \v_2\rangle$ on $\ell_2$ such that the line $M$ contains the points $S_1=\langle \u_1-\alpha\v\rangle$ and $S_2=\langle \u_1-\alpha\v_2\rangle$. Writing the minimal polynomial for $\alpha$ as $a\alpha^2+b\alpha-1=0$, we find again (as above) that for any point $\langle\u'\rangle$ with $\u'=\nu_1\u_1+\nu_2\v_1+\nu_3\u_2+\nu_4\v_2$ we have the point $\langle\v'\rangle$ with $\v'=a\nu_2\u_1+(b\nu_2+\nu_1)\v_1+a\nu_4\u_2+(b\nu_4+\nu_3)\v_2$ satisfies $\langle\u'-\alpha\v'\rangle\in M$. And hence, the space $H$ is indeed partitioned by $\Pi$-lines of type $S_\alpha$. 
\end{proof}

\section{Linear sets on a line with all points of weight at least two}
\subsection{Finding a subfield}
	We now turn our attention towards linear sets contained in a line, i.e. $\Omega=\PG(1,q^h)$ when the linear set $L$ of rank $k,4\le k\le h$, in $\Omega$ has one point of weight $k-2$ and all others of weight 2.  In Lemma \ref{thr} and Theorem \ref{thrm} we prove the existence of a proper subfield of $\mathbb{F}_{q^h}$ and later, we will prove that $L$ has this field as geometric field of linearity.

\begin{lemma}{\label{thr}}
	If $L$ is a rank 4 linear set in $\PG(1,q^h)$, $h\ge4$, with all points of weight 2, then $\mathbb{F}_{q^h}$ contains the subfield $\mathbb{F}_{q^2}$ (i.e. $h$ is even).
\end{lemma}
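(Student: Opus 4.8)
The plan is to realise $L$ as a projection, translate the hypothesis ``all points have weight $2$'' into a statement about $\Pi$-lines, and then read off the subfield from Lemma \ref{lemreg}. Since $L$ has rank $4$ in $\PG(1,q^h)$ and $4\le h$, it is not a canonical subgeometry of the line, so by \cite{lunardon} it is (equivalent to) the projection of a canonical subgeometry $\Sigma=\PG(3,q)$ of $\Sigma^*=\PG(3,q^h)$ from a subspace $\Pi$ onto $\Omega=\PG(1,q^h)$, with $\Pi$ disjoint from $\Sigma$ and $\Omega$; note that here $k-r-1=1$, so $\Pi$ is indeed a line.

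First I would record what ``all points of weight $2$'' means in this picture. A point of $L$ of weight $w$ is the projection of a $(w-1)$-dimensional subspace of $\Sigma$, namely its fibre, and distinct points of $\Sigma$ project to the same point of $L$ exactly when they lie on a common $\Pi$-line. Since every point of $L$ has weight $2$, every point of $\Sigma$ lies on a unique $\Pi$-line (the fibre of its image), so the $\Pi$-lines form a line spread $\mathcal{S}$ of $\PG(3,q)$; moreover every $\Pi$-line belongs to $\mathcal{S}$, because a $\Pi$-line is contained in the fibre of its image, which is one of the spread lines. In particular $|\mathcal{S}|=q^2+1$.

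Next I would apply Lemma \ref{lemreg} with $H=\Sigma$ (the unique $3$-dimensional subspace of $\Sigma$). Since $\Pi$ is itself a line, I take $M=\Pi$, so the incidence conditions that Lemma \ref{lemreg} asks of the extensions of the relevant lines with $M$ hold automatically. Choose any three lines $\ell_1,\ell_2,\ell_3\in\mathcal{S}$; as members of a spread they are pairwise disjoint, hence span $\Sigma$ and define a regulus $\mathcal{R}$. By the first part of Lemma \ref{lemreg}, every line of $\mathcal{R}$ is a $\Pi$-line of one common type $S_\alpha$, and so $\mathcal{R}\subseteq\mathcal{S}$. As $|\mathcal{S}|=q^2+1>q+1=|\mathcal{R}|$, there is a line $\ell\in\mathcal{S}\setminus\mathcal{R}$; being a member of the spread distinct from every line of $\mathcal{R}$, it is disjoint from all of them, and its extension meets $M=\Pi$. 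The hypotheses of the second part of Lemma \ref{lemreg} are therefore satisfied, and we conclude that $\Sigma=\PG(3,q)$ is partitioned by $\Pi$-lines of type $S_\alpha$ with $[\F_q(\alpha):\F_q]=2$. Hence $\F_q(\alpha)=\F_{q^2}$ is a subfield of $\F_{q^h}$, i.e.\ $h$ is even.

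Most of this is bookkeeping once the picture is set up. The one place where a (tiny) argument is needed is the passage from ``all points have weight $2$'' to ``the $\Pi$-lines form a spread of $\PG(3,q)$ that contains every $\Pi$-line'', together with the observation that inside such a spread a fourth line disjoint from a given regulus always exists; this is what makes the second, field-degree-pinning part of Lemma \ref{lemreg} applicable, and it is settled by the inequality $q^2+1>q+1$. I expect this to be the only real obstacle: note that the first part of Lemma \ref{lemreg} alone would only produce a common type $S_\alpha$ without controlling $[\F_q(\alpha):\F_q]$, so it is essential to exhibit the fourth line and invoke the second part.
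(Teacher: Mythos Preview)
Your proof is correct and follows essentially the same approach as the paper: both set up the projection picture with $\Pi$ a line in $\Sigma^*=\PG(3,q^h)$, observe that the $q^2+1$ fibres are a spread of $\Sigma=\PG(3,q)$ by $\Pi$-lines, and then invoke Lemma~\ref{lemreg} to force $[\F_q(\alpha):\F_q]=2$. The paper's proof is extremely terse (one sentence citing Lemma~\ref{lemreg}), whereas you have spelled out the key count $q^2+1>q+1$ that guarantees a fourth spread line outside the regulus and so makes the second part of Lemma~\ref{lemreg} applicable; this is exactly the mechanism the paper is using implicitly.
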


\begin{proof}
	A rank $4$ linear set $L$ in $\Omega=\PG(1,q^h) $ can be viewed as a projection of $\Sigma=\PG(3,q)$, a canonical subgeometry of $\Sigma^*=\PG(3,q^h)$, from $\Pi=\PG(1,q^h) $ onto $\Omega$, where $\Pi$ and $\Omega$ are in $\Sigma^*$ with $\Pi$ disjoint from $\Sigma$ and $\Omega$. In this case all the $q^2+1$ weight 2 points in $L$ correspond to different $\Pi$-lines of rank 2 each meeting the line $\Pi$ in a point. 
	 Using Lemma \ref{lemreg} we see that all $\Pi$-lines are of type $S_\alpha$ with $[\F_q(\alpha):\F_q]=2$.

\end{proof}

	\begin{remark} It will follow from our more general approach that the linear sets considered in Lemma \ref{thr} have geometric field of linearity $\F_{q^2}$ (see Main Theorem). In this particular case however, it is not too hard to see that the set $L$ is in fact $\F_{q^2}$-linear and isomorphic to $\PG(1,q^2)$. A different way to deduce this fact is to use a characterisation of sublines (e.g. given in \cite[Theorem 1.5]{sara}): a set of $q^2+1$ points in $\PG(1,q^h)$ is a subline $\PG(1,q^2)$ if and only if it is closed under taking $\F_q$-sublines determined by any three points of the set.
	
	\end{remark}


\begin{theorem}\label{thrm}
	If $L$ is a linear set of rank $k,5\le k\le h$, in $\PG(1,q^h)$ with one point of weight $k-2$ and all other points of weight 2 then $\mathbb{F}_{q^h}$ must contain a proper subfield $\mathbb{F}_{q^s}$ with $s\mid k-2$, $s>1$.
\end{theorem}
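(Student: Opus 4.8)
The plan is to realise $L$ as the projection of $\Sigma=\PG(k-1,q)$ from $\Pi=\PG(k-3,q^h)$ onto $\Omega=\PG(1,q^h)$, with $\Pi$ disjoint from $\Sigma$ and from $\Omega$, and to extract the claimed subfield from the types of the $\Pi$-lines inside the subspace corresponding to $P$. Write $H$ for the $(k-3)$-dimensional subspace of $\Sigma$ corresponding to the weight-$(k-2)$ point $P$; then $\langle H\rangle_{q^h}$ meets $\Pi$ in a hyperplane $H'=\PG(k-4,q^h)$ of $\langle H\rangle_{q^h}$ which is disjoint from $\Sigma$. Two elementary facts should be recorded first. (i) \emph{Every} line of $H$ is a $\Pi$-line: its extension is a line of $\langle H\rangle_{q^h}$ and hence meets the hyperplane $H'\subseteq\Pi$. (ii) The $q^{k-2}$ points of weight $2$ correspond exactly to the $\Pi$-lines \emph{not} contained in $H$, and these are pairwise disjoint and disjoint from $H$, since two intersecting $\Pi$-lines span a plane projecting to a single point of $L$, necessarily of weight $\ge 3$, hence equal to $P$, which forces both lines into $H$. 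A dimension count then shows that $H$ together with these $q^{k-2}$ lines partitions $\Sigma$ into a subspace and a spread $\mathcal L$ of $q^{k-2}$ pairwise disjoint lines of its complement; this rigidity is what should ultimately force a subfield.

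The goal is to exhibit at least $\frac{q^{k-2}-1}{q-1}$ $\Pi$-lines of a common type $S_\alpha$ inside $H$. Once this is achieved, Lemma~\ref{lemdiv} applies with $w=k-2$ and yields $s:=[\F_q(\alpha):\F_q]>2$ with $s\mid k-2$ and $s\mid h$, which is exactly the assertion. Since $H=\PG(k-3,q)$ has $\binom{k-2}{2}_q$ lines, and $\binom{k-2}{2}_q\big/\frac{q^{k-2}-1}{q-1}=\frac{q^{k-3}-1}{q^2-1}\ge 1$ for $k\ge 5$, it suffices to show that only few distinct types occur among the lines of $H$ (roughly, at most $\frac{q^{k-3}-1}{q^2-1}$ of them), after which pigeonhole and Lemma~\ref{lemdiv} finish the proof. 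Note that since $S_\alpha=\F_{q^s}\setminus\F_q$ whenever $[\F_q(\alpha):\F_q]=s$, "same type" is the same as "same field of degree $s$", which is convenient for the counting.

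To control the types the main tool is Lemma~\ref{lemreg}. For each plane $\pi\subseteq H$, all $q^2+q+1$ of its lines are $\Pi$-lines whose extensions meet the single line $\langle\pi\rangle_{q^h}\cap H'$ of $\Pi$; embedding $\pi$ in a $3$-dimensional subspace $K\subseteq H$ (possible once $k\ge 6$) one locates reguli inside $H$ to which Lemma~\ref{lemreg} applies, so that whole reguli of $\Pi$-lines of $H$ share a type. Propagating this relation across overlapping reguli and planes, and anchoring it on the base case $k=5$ — where $H=\PG(2,q)$ and one checks directly, using the partition of $\Sigma$ and the disjointness of the weight-$2$ lines, that all $q^2+q+1$ lines of $H$ share one type, necessarily of degree $3$ — bounds the number of types occurring on $H$. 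An induction on $k$ organises this: slicing $\Sigma$ by a hyperplane defined over $\F_q$ through a hyperplane $H_1$ of $H$ produces a rank-$(k-1)$ linear set in $\PG(1,q^h)$ with one point of weight $k-3$ and all others of weight $2$, whose "$H_1$" lies inside $H$ with the \emph{same} types (the type of a $\Pi$-line contained in the sliced subgeometry is unchanged). The degenerate possibility of a degree-$2$ type is handled exactly as in Lemma~\ref{thr}, giving $\F_{q^2}\subseteq\F_{q^h}$, which settles the statement when $k$ is even.

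The main obstacle is precisely the type-control step. A priori the number of types $S_\alpha$, i.e.\ of orbits of $\mathrm{PGL}(2,q)$ on $\F_{q^h}\setminus\F_q$, is of order $q^{h-3}$, which dwarfs the number of lines through a point of $H$ as soon as $h\gg k$, so no counting argument survives on its own; the whole content is to extract enough geometric rigidity from the weight-$2$ hypothesis — the spread $\mathcal L$ of $q^{k-2}$ pairwise disjoint $\Pi$-lines partitioning $\Sigma\setminus H$ and meeting $\Pi$ outside $\langle H\rangle_{q^h}$ — to run the regulus propagation of Lemma~\ref{lemreg} and collapse the types on $H$ to a controllable number.
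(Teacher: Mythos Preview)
Your setup and endgame are exactly right: produce enough $\Pi$-lines of a common type inside $H$ and invoke Lemma~\ref{lemdiv}. But the proposed route to that count --- regulus propagation \emph{inside} $H$ together with induction on $k$ --- does not work. Lemma~\ref{lemreg} needs three \emph{disjoint} lines whose extensions meet a common \emph{line} of $\Pi$; the lines of a plane $\pi\subseteq H$ do meet the single line $\langle\pi\rangle_{q^h}\cap H'$, but are never pairwise disjoint, while three disjoint lines in a $3$-space $K\subseteq H$ have extensions meeting the \emph{plane} $\langle K\rangle_{q^h}\cap H'$, with no reason for those three intersection points to be collinear. So the regulus step has no starting point inside $H$. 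The induction is also broken: a hyperplane $\Sigma_1\subseteq\Sigma$ through a hyperplane $H_1$ of $H$ meets each $\ell_Q\in\mathcal L$ in either a point or all of $\ell_Q$, and a direct count shows that exactly $q^{k-4}$ of the $q^{k-2}$ weight-$2$ lines lie in $\Sigma_1$, so the sliced linear set acquires $q^{k-2}-q^{k-4}$ points of weight $1$ and the inductive hypothesis fails. (Incidentally, $S_\alpha=\F_{q^s}\setminus\F_q$ only for $s\le 3$; for $s>3$ Lemma~\ref{family} gives $|S_\alpha|=q^3-q<q^s-q$, so ``same type'' is strictly finer than ``same degree'', which undermines the counting reduction you sketch.)

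The idea you are missing is to look \emph{outside} $H$ first. For $\ell_1,\ell_2\in\mathcal L$, the $3$-space $J=span(\ell_1,\ell_2)$ has $\bar{J}\cap\Pi$ equal to a line $M$ (if it were a plane the two weight-$2$ points would collapse to one), and $J\cap H$ is a line $\ell$ since $k\ge 5$. Now $\ell_1,\ell_2,\ell$ are three disjoint lines whose extensions all meet $M$, so Lemma~\ref{lemreg} fires: all three share a type $S_\alpha$. Varying $\ell_2$ over $\mathcal L$ shows every weight-$2$ line has type $S_\alpha$ \emph{and} simultaneously manufactures lines $span(\ell_1,\ell')\cap H$ of type $S_\alpha$ inside $H$. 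Counting these either reaches the Lemma~\ref{lemdiv} threshold, or --- if some $span(\ell_1,\ell')$ already contains more than $q-1$ lines of $\mathcal L$ --- the second clause of Lemma~\ref{lemreg} forces $[\F_q(\alpha):\F_q]=2$, and a divisibility argument on the resulting partition of $\Sigma\setminus H$ gives $2\mid k-2$. The weight-$2$ hypothesis is used precisely to force $\bar{J}\cap\Pi$ down to a line; this is the rigidity you allude to in your final paragraph but never actually exploit.
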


\begin{proof}
	Recall that $L$ can be viewed as a projection of $\Sigma=\PG(k-1,q)$, a canonical subgeometry of $\Sigma^*=\PG(k-1,q^h)$, from $\Pi=\PG(k-3,q^h)$ onto $\Omega=\PG(1,q^h)$, where $\Pi$ and $\Omega$ are subspaces of $\Sigma^*$, with $\Pi$ disjoint from $\Sigma$ and $\Omega$. We have seen that each of the weight 2 points determines a unique $\Pi$-line and the weight $k-2$ point corresponds to a $(k-3)$-dimensional subspace $H$ of $\Sigma$. Denote the set of $\Pi$-lines corresponding to the weight 2 points by $\mathcal{S}$. Note that the $\Pi$-lines in $\mathcal{S}$ partition the set of points in $\Sigma\setminus H$. Also note that the extension of $H$ to $\Sigma^*$ intersects $\Pi$ in a hyperplane $H'$ of $\Pi$.
	
	Let $\ell_1$ and $\ell_2$ be different lines from $\mathcal{S}$ and let $J=span(\ell_1,\ell_2)$ be the $3$-dimensional subspace of $\Sigma$ spanned by $\ell_1$ and $\ell_2$. Since $\ell_1$ and $\ell_2$ are $\Pi$-lines, the extension $\bar{J}=span(\ell_1,\ell_2)$ of $J$ in $\Sigma^*$ contains the points $\bar{\ell_1}\cap \Pi$ and $\bar{\ell_2}\cap \Pi$. If $\dim(\bar{J}\cap\Pi)>1$, then all the points in $J$ would be projected onto a single point in $\Omega$, a contradiction since $\ell_1$ and $\ell_2$ correspond to different points of $L$. We conclude that $\bar{J}\cap\Pi$ is a line $M$. Since $\dim(\Sigma)\geq 5$, $\dim(J)=3$ and $span(J,H)=\Sigma$, $J\cap H$ is a line $\ell$. The lines $\ell_1,\ell_2$ and $\ell$ define a unique regulus $\mathcal{R}$ in $J$. Note that $\ell$ is contained in the $(k-3)$-space $H$, and hence, its extension $\bar{\ell}$ meets the $(k-4)$-space $H'$ in a point which then necessarily lies on $M$.
	
	
	 By Lemma \ref{lemreg} all the $\Pi$-lines in $\mathcal{R}$ are of the same type, say $S_\alpha$. Repeating the same argument for any pair $\ell_1,\ell_2'$ of lines in $\mathcal{S}$, we see that all the $\Pi$-lines in $\mathcal{S}$ are of type $S_\alpha$. 
	
Now fix a $\Pi$-line $\ell\in S$ of type $S_\alpha$. Consider the set of lines $\mathcal{T}=\{span(\ell,\ell')\cap H\mid \ell'\in \mathcal{S}\setminus\{\ell\}\}$. First suppose that each of the spaces $span(\ell,\ell')$, $\ell'\in \mathcal{S}\setminus\{\ell\}$, contains at most $q-1$ lines of $\mathcal{S}\setminus\{\ell\}$, then $\mathcal{T}$ is a set of at least $q^{k-3}+q^{k-4}+\dots+1$ different $\Pi$-lines of type $S_\alpha$ in $H$. By Lemma \ref{lemdiv} $\mathbb{F}_{q^h}$ must have a subfield $\mathbb{F}_{q^s}$ with $s>2,s\mid k-2$ and we are done.

So suppose that there is a line $\ell'\in \mathcal{S}\setminus\{\ell\}$ such that $span(\ell,\ell')$ contains more than $q-1$ lines of $\mathcal{S}\setminus\{\ell\}$. Then by Lemma \ref{lemreg} we will have $[\F_q(\alpha):\F_q]=2$, and hence, $\F_{q^2}$ is a subfield of $\F_{q^h}$. We also find that $span( \ell,\ell')\setminus (H\cap span( \ell,\ell'))$ is partitioned by lines of $\mathcal{S}$. Furthermore, again invoking Lemma \ref{lemreg}, since $[\F_q(\alpha):\F_q]=2$, we find that each $3$-space spanned by $\ell$ and a line $\ell''$ of $\mathcal{S}\setminus \{\ell\}$ is partitioned into $q^2+1$ $\Pi$-lines of type $S_\alpha$, one of which is $\ell$ and one of which is the intersection of $span(\ell,\ell'')$ with $H$. Hence, we find that $q^2-1$ needs to divide $q^{k-2}-1$, which in turn implies that $2\mid k-2$.

%
\end{proof}

\begin{remark}\label{2rem}
	If $s=2$ in Theorem \ref{thrm} and $\ell$ is of type $S_\alpha$, then $[\mathbb{F}_q(\alpha):\mathbb{F}_q]=2$ and all the $\Pi$-lines in $\mathcal{T}$ are of type $S_\alpha$. By Lemma \ref{lemtyp}, every point in $H$ lies in at most one $\Pi$-line of type $S_\alpha$, and it is easy follows that each of the points in $H$ lies on exactly one $\Pi$-line of type $S_\alpha$.
	The line spread determined by $\Pi$-lines of type $S_\alpha$ of $H$ is a normal spread. This follows easily from the construction: for $m_1,m_2\in \mathcal{T}$, $\ell\in \mathcal{S}$, the subspace $span(\ell,m_1,m_2)$ is partitioned by elements of $\mathcal{S}\cup \mathcal{T}$.

	
\end{remark}

\subsection{(Re)constructing linear sets}
	In the following construction we use the projection point of view to construct an $\F_q$-linear set which will later prove to be the unique way of describing linear sets satisfying the hypotheses used in Lemma \ref{thr} and Theorem \ref{thrm}. We show that these linear sets have geometric field of linearity $\F_{q^s}$.

	We use the standard vectors $\{\e_i\mid i=1,\dots,k\}$, where $\e_i=(0,\dots,0,1,0,\dots,0)$ is a vector of length $k$ with $1$ in the $i_\text{th}$ position and $0$ elsewhere, as the basis of $V$, the $k$-dimensional vector space over $\mathbb{F}_{q}$ defining $\Sigma=\PG(V)=\PG(k-1,q)$. We use the same set of vectors as the basis of $V'$, a $k$-dimensional vector space over $\mathbb{F}_{q^h}$ defining $\Sigma^*=\PG(V')=\PG(k-1,q^h)$, thus $\Sigma$ is a canonical subgeometry of $\Sigma^*$.
	
	
 
\begin{construction}{\label{const}}
	Let $\Sigma^*=\PG(k-1,q^h)=span_{q^h}( \langle\e_1\rangle,\dots, \langle\e_k\rangle),4\le k\le h,$. Let $\Sigma$ be the canonical subgeometry embedded in $\Sigma^*$, given by all points $\langle u\rangle_{q^h}$, where $u\in span_q( \e_1,\ldots,\e_k)$. 
Suppose that $k=rs+2$ and $s\mid h$.
	Partition the ordered set $\{\e_1,\dots,\e_{k}\}=\{\e_1,\dots,\e_{rs},\e_{rs+1},\e_{rs+2}\}$ into $r+1$ parts, $r$ of which have size $s\ge2$ and are called $A_1,\ldots, A_r$, and are given by the ordered sets
	\begin{align*}
		A_i=&\{\e_{(i-1)s+1},\dots,\e_{is}\}=\{\e_{i,1},\dots,\e_{i,s}\}
	\end{align*}
	and the last part is of size $2$ given by the set $$B=\{\e_{rs+1},\e_{rs+2}\}.$$

	Consider $\alpha \in{\mathbb{F}}_{q^h}\backslash\mathbb{F}_q$ generating a degree $s$ extension of $\mathbb{F}_q$ (i.e. $[\mathbb{F}_q(\alpha):\mathbb{F}_q]=s$). With each $A_i, i=1,\dots,r$, we associate the $(s-2)$-dimensional subspace $\Pi_i$ of $\Sigma^*$ given by $$\Pi_i=span(\langle \e_{i,1}-\alpha \e_{i,2}\rangle,\langle \e_{i,2}-\alpha \e_{i,3}\rangle,\dots,\langle \e_{i,s-1}-\alpha \e_{i,s}\rangle).$$
	Let $\Pi=\PG(k-3,q^h)$ be the subspace $$\Pi:=span( \Pi_1,\Pi_2,\dots,\Pi_r,\langle \e_{1,s}-\beta_1\e_{2,s}\rangle,\langle \e_{2,s}-\beta_2\e_{3,s}\rangle,\dots,\langle \e_{r-1,s}-\beta_{r-1}\e_{r,s}\rangle,\langle \e_{rs+1}-\alpha\e_{rs+2}\rangle)$$ with $\beta_1,\dots,\beta_{r-1}\in\mathbb{F}_{q^h}\backslash\mathbb{F}_{q^s}$ such that $\Pi$ is a $(k-3)$-space disjoint from $\Sigma$ and $\Omega$.


	Finally, let $\Omega=\PG(1,q^h)$ be the subspace $$\Omega:=span(\langle \e_{r,s}\rangle,\langle\e_{rs+2}\rangle) .$$
\end{construction}

The projection of $\Sigma$ from $\Pi$ onto $\Omega$ in Construction \ref{const} is an $\mathbb{F}_q$-linear set, and we will show now that it has geometric field of linearity $\F_{q^s}$. 

\begin{lemma}{\label{linearity}}
	The linear set $L$ obtained from Construction \ref{const} has geometric field of linearity $\mathbb{F}_{q^s}$. 
\end{lemma}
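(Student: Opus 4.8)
The plan is to make the projection defining $L$ completely explicit, read $L$ off as a concrete linear set in $\PG(1,q^h)$, and then exhibit an $\mathbb{F}_{q^s}$-subspace $W$ of $\mathbb{F}_{q^h}^2$ with $L_W=L$; by definition, producing such a $W$ is exactly what it means for $L$ to have geometric field of linearity $\mathbb{F}_{q^s}$.

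First I would compute $\mathscr{P}_{\Pi,\Omega}(\langle\v\rangle)$ for an arbitrary point $\langle\v\rangle$ of $\Sigma$, say $\v=\sum_{i=1}^{r}\sum_{j=1}^{s}x_{i,j}\e_{i,j}+x_{rs+1}\e_{rs+1}+x_{rs+2}\e_{rs+2}$ with all coordinates in $\mathbb{F}_q$. The computation rests on the telescoping identity
$$\sum_{j=1}^{s}x_{i,j}\e_{i,j}=\Bigl(\sum_{m=1}^{s-1}c^{(i)}_m\,(\e_{i,m}-\alpha\e_{i,m+1})\Bigr)+c^{(i)}_s\e_{i,s},\qquad c^{(i)}_m:=\sum_{j=1}^{m}x_{i,j}\alpha^{m-j},$$
whose bracketed summand lies in the vector subspace corresponding to $\Pi_i$; together with $x_{rs+1}\e_{rs+1}+x_{rs+2}\e_{rs+2}=x_{rs+1}(\e_{rs+1}-\alpha\e_{rs+2})+c_B\e_{rs+2}$ with $c_B:=x_{rs+1}\alpha+x_{rs+2}$, and the relations $\e_{i,s}\equiv\gamma_i\e_{r,s}$ modulo the vectors $\e_{i,s}-\beta_i\e_{i+1,s}$ $(1\le i<r)$, where $\gamma_i:=\beta_i\beta_{i+1}\cdots\beta_{r-1}$ for $i<r$ and $\gamma_r:=1$, summing everything shows that $\v$ is congruent, modulo the vector subspace corresponding to $\Pi$, to $\bigl(\sum_{i=1}^{r}c^{(i)}_s\gamma_i\bigr)\e_{r,s}+c_B\e_{rs+2}$. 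Since $\Pi\cap\Sigma=\emptyset$ this vector is nonzero whenever $\v\ne 0$, and lying in the span of $\e_{r,s}$ and $\e_{rs+2}$ it represents $\mathscr{P}_{\Pi,\Omega}(\langle\v\rangle)$. As $\{1,\alpha,\dots,\alpha^{s-1}\}$ is an $\mathbb{F}_q$-basis of $\mathbb{F}_{q^s}=\mathbb{F}_q(\alpha)$, each $c^{(i)}_s=\sum_j x_{i,j}\alpha^{s-j}$ ranges over all of $\mathbb{F}_{q^s}$, while $c_B$ ranges over the two-dimensional $\mathbb{F}_q$-subspace $\mathbb{F}_q+\mathbb{F}_q\alpha$ of $\mathbb{F}_{q^s}$. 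Identifying $\Omega$ with $\PG(1,q^h)$ via the basis $(\e_{r,s},\e_{rs+2})$, this gives
$$L=\bigl\{\langle(T,c)\rangle_{q^h}\ \bigm|\ T\in W_1,\ c\in\mathbb{F}_q+\mathbb{F}_q\alpha\bigr\},$$
where $W_1$ is the $\mathbb{F}_{q^s}$-subspace of $\mathbb{F}_{q^h}$ spanned by $\gamma_1,\dots,\gamma_r$.

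Next I would put $W:=\{(T,u)\mid T\in W_1,\ u\in\mathbb{F}_{q^s}\}$, which is an $\mathbb{F}_{q^s}$-subspace of $\mathbb{F}_{q^h}^2$ since $W_1$ is $\mathbb{F}_{q^s}$-linear, and check $L=L_W$. The inclusion $L\subseteq L_W$ is immediate from $\mathbb{F}_q+\mathbb{F}_q\alpha\subseteq\mathbb{F}_{q^s}$. For $L_W\subseteq L$: given $\langle(T,u)\rangle\in L_W$ with $T\in W_1$ and $u\in\mathbb{F}_{q^s}$, if $u\ne 0$ then $u^{-1}\in\mathbb{F}_{q^s}$, so $\langle(T,u)\rangle=\langle(u^{-1}T,1)\rangle$ with $u^{-1}T\in W_1$ and $1\in\mathbb{F}_q+\mathbb{F}_q\alpha$, whence this point lies in $L$; if $u=0$ then $\langle(T,0)\rangle=\langle(1,0)\rangle$, which lies in $L$ since $1=\gamma_r\in W_1$ and $0\in\mathbb{F}_q+\mathbb{F}_q\alpha$. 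Thus $L=L_W$ with $W$ an $\mathbb{F}_{q^s}$-linear subspace, so $L$ has geometric field of linearity $\mathbb{F}_{q^s}$. The one place where the special shape of Construction \ref{const} is genuinely used is this last step — dividing $\langle(T,u)\rangle$ by $u\in\mathbb{F}_{q^s}^*$ turns the ``small'' second coordinate $\mathbb{F}_q+\mathbb{F}_q\alpha$ into the full field $\mathbb{F}_{q^s}$, at the price of collapsing all vectors $(T,0)$ to the single point $\langle(1,0)\rangle$, which is exactly the weight-$(k-2)$ point of $L$. I expect the main obstacle to be organizational rather than conceptual: stating and checking the telescoping identity correctly and tracking which summands are absorbed by which part of $\Pi$. (As a consistency check one may observe that $\Pi\cap\Omega=\emptyset$ forces $\gamma_1,\dots,\gamma_r$ to be $\mathbb{F}_{q^s}$-linearly independent, so that $L_W$ is a club of rank $r+1$ over $\mathbb{F}_{q^s}$ with $q^{k-2}+1$ points, matching the prescribed weight distribution.)
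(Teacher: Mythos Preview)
Your proof is correct and follows essentially the same route as the paper's own argument: both compute the projection explicitly to obtain $L=\{\langle(\chi_1,\chi_2)\rangle\}$ with $\chi_1$ ranging over the $\mathbb{F}_{q^s}$-span of the $\beta$-products and $\chi_2$ over $\mathbb{F}_q+\mathbb{F}_q\alpha$, then exhibit the $\mathbb{F}_{q^s}$-linear set $L_W$ (the paper calls it $M$) obtained by enlarging the second coordinate to all of $\mathbb{F}_{q^s}$, and check $L=L_W$ via the same division-by-$u$ trick. Your telescoping identity makes explicit what the paper simply states as the projection formula, but the structure of the argument is identical.
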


\begin{proof}
	By the construction any point $\langle (\lambda_1,\dots,\lambda_{k})\rangle_{q^h}=\langle (\lambda_{1,1},\dots,\lambda_{r,s},\lambda_{rs+1},\lambda_{rs+2})\rangle_{q^h}$ in $\Sigma$ is projected onto the point $\langle(0,\dots,0,\chi_1,0,\chi_2)\rangle_{q^h}$ in $\Omega$ where
		\begin{align*}
			\chi_1= \beta_{1}\beta_{2}\dots\beta_{r-1}&(\alpha^{s-1}\lambda_{1,1}+\alpha^{s-2}\lambda_{1,2}+\dots+\lambda_{1,s})\\
			+\beta_2\beta_3\dots\beta_{r-1}&(\alpha^{s-1}\lambda_{2,1}+\alpha^{s-2}\lambda_{2,2}+\dots+\lambda_{2,s})\\
			+\ldots\quad\quad&\\
			+\beta_{r-1}&(\alpha^{s-1}\lambda_{r-1,1}+\alpha^{s-2}\lambda_{r-1,2}+\dots+\lambda_{r-1,s})\\
			+&(\alpha^{s-1}\lambda_{r,1}+\alpha^{s-2}\lambda_{r,2}+\dots+\lambda_{r,s})
		\end{align*}
	and
	\begin{align*}
		\chi_2= \lambda_{rs+2}+\alpha \lambda_{rs+1}.
	\end{align*}

	Thus the linear set $L$ consists of all points $\langle\chi_1\e_{r,s}+\chi_2\e_{rs+2}\rangle_{q^h}$ with
	$$(\chi_1,\chi_2)=(\beta_1\beta_2\dots\beta_{r-1}f_1(\alpha)+\beta_2\beta_3\dots\beta_{r-1}f_2(\alpha)+\dots+f_r(\alpha),\lambda_{rs+2}+\alpha \lambda_{rs+1}),$$
where $f_i(\alpha)$ is an arbitrary polynomial with degree at most $s-1$ and coefficients in $\mathbb{F}_q$ and $\lambda_{rs+1}, \lambda_{rs+2}\in \F_q$. Recall that any element of $\mathbb{F}_{q^s}$ can be represented as a polynomial in $\alpha$ with coefficients in $\mathbb{F}_q$ and degree at most $s-1$ and vice versa, each such polynomial defines an elements of $\mathbb{F}_{q^s}$. It follows that 	$$(\chi_1,\chi_2)=(\beta_1\beta_2\dots\beta_{r-1}\gamma_1+\beta_2\beta_3\dots\beta_{r-1}\gamma_2+\dots+\gamma_r,\lambda_{rs+2}+\alpha \lambda_{rs+1})$$
where $\gamma_i$, $1\leq i\leq r$ an arbitrary element of $\F_{q^s} $ and $\lambda_{rs+1}, \lambda_{rs+2}\in \F_q$. 
Now consider the set of points $M$ of the form $\langle\chi_1'\e_{r,s}+\chi_2'\e_{rs+2}\rangle_{q^h}$
where $$(\chi_1',\chi_2')=(\beta_1\beta_2\dots\beta_{r-1}\gamma_1+\beta_2\beta_3\dots\beta_{r-1}\gamma_2+\dots+\gamma_r,\gamma_{r+1}),$$ where $\gamma_i\in \F_{q^s}$, $1\leq i\leq r+1$.
Since the set of vectors of the form $(\chi_1',\chi_2')$ is closed under addition and $\F_{q^s}$-multiplication, the set $M$ is an $\F_{q^s}$-linear set of $\F_{q^s}$-rank $r+1$ (and an $\F_q$-linear set of rank $rs+s$). Note that, as an $\F_{q^s}$-linear set, $M$ contains points of  $\F_{q^s}$-weight $1$, and hence, $M$ is not an $\F_{q^{s^i}}$-linear set for any $i>1$.
It is clear that every point in $L$ also belongs to $M$, so $L\subseteq M$. Moreover, if $P$ is a point of $M$, then either $P=\langle \e_{r,s}\rangle_{q^h}$ which also belongs to $L$, or $P=\langle (\beta_1\beta_2\dots\beta_{r-1}\gamma'_1+\beta_2\beta_3\dots\beta_{r-1}\gamma'_2+\dots+\gamma'_r)\e_{r,s}+1\e_{rs+2}\rangle_{q^h}$, where $\gamma_i'=\gamma_i/\gamma_{r+1}$, which then clearly belongs to $L$ too. We conclude that $L=M$, and since $M$ is an $\F_{q^s}$-linear set, $L$ has geometric field of linearity $\F_{q^s}$.
%
%
%
%
%
\end{proof}

\begin{remark}\label{club}
	The linear set $L$ in Lemma \ref{linearity} contains $q^{rs}+1$ points: one corresponds to $\langle\e_{r,s}\rangle$ and the other $q^{rs}$ correspond to the different choices of the polynomials $f_1,\dots,f_r$ (or equivalently, the choice of elements $\gamma_1,\ldots,\gamma_r$). The $\F_q$-weight of the point $\langle\e_{r,s}\rangle$ is $ls=k-2$ and every point other point in $L$ is of weight $2$. We have shown that $L=M$, where $M$ is an $\mathbb{F}_{q^s}$-linear set, forming an $l$-club: it has $(q^s)^r$ points of $\F_{q^s}$-weight $1$ and $1$ point of $\F_{q^s}$-weight $l$.
We also see that if $l=1$ and $r=1$, then $L$ is a set of $q^{s}+1$ points (with $s=k-2$) which is the set of points of an $\F_{q^s}$-subline of $\PG(1,q^h)$.
\end{remark}

\begin{remark} Recall from the introduction that every $\F_q$-linear set can be described as the set of spread elements of the Desarguesian spread $\mathcal{D}$ in $\PG(2h-1,q)$ intersecting some fixed subspace. On one hand, the $\F_q$-linear set $L$ of Construction \ref{const} can be seen as those elements of $\mathcal{D}$ meeting a fixed $(k-1)$-space $\pi$, where there is one element, say $\sigma_P$, of $\mathcal{D}$ meeting $\pi$ in a $(k-3)$-space $\mu$ and the other elements of $\mathcal{D}$ meet $\pi$ in a line or are disjoint from $\pi$. 
On the other hand, we have shown in Lemma \ref{linearity} that $L$ can be seen as the set of elements of the Desarguesian spread $\mathcal{D}$ which meet a fixed $(k+s-3)$-space $\pi'$, where $\pi'$ contains $\pi$ and such that $\sigma_P\cap \pi=\sigma_P\cap \pi'$. Hence, there is one element, $\sigma_P$ meeting $\pi'$ in a $(k-3)$-space and all other elements of $\mathcal{D}$ either meet $\pi'$ in an $(s-1)$-dimensional space or are disjoint from $\pi'$.
The set of $(s-1)$-dimensional subspaces of $\pi'$ obtained as the intersections of elements of $\mathcal{D}$ with $\pi'$ are contained in the unique Desarguesian $(s-1)$-subspread of $\mathcal{D}$ (see also \cite{spreads}).

\end{remark}

\subsection{The proof of the main theorem}
The following proposition forms the base case for our main theorem.
\begin{proposition}\label{coromain} If $L$ is a rank 4 linear set in $\PG(1,q^h)$, $h\ge4$, with all points of weight 2, then $L$ is an $\F_{q^2}$-linear set $\cong\PG(1,q^2)$.
If $L$ is a linear set of rank $k,5\le k\le h$, in $\PG(1,q^h)$ with one point of weight $k-2$ and all other points of weight $2$ then $\mathbb{F}_{q^h}$ must has geometric field of linearity $\mathbb{F}_{q^s}$ with $s\mid k-2$, $s>1$, and $s\mid h$.
\end{proposition}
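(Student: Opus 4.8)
The plan is to show that every linear set $L$ satisfying the hypotheses is projectively equivalent to a linear set produced by Construction \ref{const}, after which Lemma \ref{linearity} applies verbatim. In the base case $k=4$, Lemma \ref{thr} already yields $\F_{q^2}\subseteq\F_{q^h}$, and its proof, via Lemma \ref{lemreg}, shows that $\Sigma=\PG(3,q)$ is partitioned by $\Pi$-lines which all have the same type $S_\alpha$ with $[\F_q(\alpha):\F_q]=2$; since $\Pi$ is itself a line, any three of these spread lines define a regulus contained in the spread, so the spread is regular. I would then choose a frame of $\Sigma$ adapted to one such regulus (together with one further spread line in the complementary $3$-space) to realise $(\Sigma,\Pi,\Omega)$ as Construction \ref{const} with $r=1$, $s=2$. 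Since for $r=1$, $s=2$ the set $M$ occurring in Lemma \ref{linearity} is exactly $\{\langle(a,b)\rangle_{q^h}\mid a,b\in\F_{q^2}\}$, this gives that $L$ is an $\F_{q^2}$-linear set isomorphic to $\PG(1,q^2)$. (Alternatively one may observe that $|L|=q^2+1$ and invoke the subline characterisation of \cite{sara} once one checks that $L$ is closed under $\F_q$-sublines through three of its points.)

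For the general case, write $w=k-2$. By Theorem \ref{thrm} and its proof, $\F_{q^h}$ contains $\F_{q^s}$ with $s\mid w$ and $s>1$ (so in particular $s\mid h$), and there is an element $\alpha$ with $[\F_q(\alpha):\F_q]=s$ such that \emph{every} one of the $q^{w}$ $\Pi$-lines in the set $\mathcal S$ of $\Pi$-lines corresponding to the weight-$2$ points has type $S_\alpha$. Set $r=w/s\ge 1$, so $k=rs+2$. The weight-$w$ point corresponds to a $(w-1)$-dimensional subspace $H$ of $\Sigma$, and the proof of Theorem \ref{thrm} splits into two cases. Either $H$ contains at least $q^{w-1}+\dots+1$ $\Pi$-lines of type $S_\alpha$, in which case $s>2$ and Lemma \ref{lemdiv} together with Remark \ref{srem} shows that $H$ carries a Desarguesian $(s-1)$-spread $\mathcal F$ with the property that on each $D\in\mathcal F$ the map sending a point $\langle\w\rangle$ of $D$ to the unique point $\langle\w'\rangle$ of $H$ with $\langle\w-\alpha\w'\rangle\in\Pi$ maps $D$ to itself and is induced by an $\F_q$-linear Singer cycle; or $s=2$ and Remark \ref{2rem} gives the same conclusion with $\mathcal F$ a normal, hence Desarguesian, line spread of $H$. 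In either case, each $D\in\mathcal F$ has an ordered $\F_q$-basis $\e_1,\dots,\e_s$ with $\langle\e_j-\alpha\e_{j+1}\rangle\in\Pi$ for $1\le j\le s-1$.

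The reconstruction would then proceed as follows. First I would choose elements $D_1,\dots,D_r\in\mathcal F$ that span $H$ (possible because $\mathcal F$ is Desarguesian) and, on each $D_i$, an adapted basis $\e_{i,1},\dots,\e_{i,s}$ as above, so that the $(s-2)$-spaces $\Pi_i=span(\langle\e_{i,1}-\alpha\e_{i,2}\rangle,\dots,\langle\e_{i,s-1}-\alpha\e_{i,s}\rangle)$ all lie in $H'=\bar H\cap\Pi$, which is a $(k-4)$-dimensional hyperplane of $\Pi$. A dimension count (the $\Pi_i$ together span only a $(k-r-3)$-space of $H'$) then forces the existence, for $1\le i\le r-1$, of points $\langle\e_{i,s}-\beta_i\e_{i+1,s}\rangle\in H'$ which together with the $\Pi_i$ span $H'$; here $\beta_i\notin\F_{q^s}$, since a $\Pi$-line of type $S_\alpha$ meeting two distinct elements of $\mathcal F$ would contradict that the $\Pi$-lines of type $S_\alpha$ inside $H$ lie in single spread elements. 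Finally I would take a line $\ell_0\in\mathcal S$ meeting $H$ in the ``$\langle\e_{r,s}\rangle$''-direction, write $\ell_0=span(\langle\e_{rs+1}\rangle,\langle\e_{rs+2}\rangle)$ with $\langle\e_{rs+1}-\alpha\e_{rs+2}\rangle$ the point of $\Pi$ completing $H'$ to $\Pi$, and set $\Omega=span(\langle\e_{r,s}\rangle,\langle\e_{rs+2}\rangle)$. With $\{\e_{i,j}\}\cup\{\e_{rs+1},\e_{rs+2}\}$ as a frame of $\Sigma$, the triple $(\Sigma,\Pi,\Omega)$ is exactly that of Construction \ref{const}, and projecting $\Sigma$ from $\Pi$ onto $\Omega$ recovers $L$; Lemma \ref{linearity} then gives that $L$ has geometric field of linearity $\F_{q^s}$, with the divisibility conditions on $s$ already established.

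The main obstacle is the gluing carried out in the last paragraph: one must check that the locally defined Singer bases on the $D_i$, the connecting points $\langle\e_{i,s}-\beta_i\e_{i+1,s}\rangle$ and the extra pair $\e_{rs+1},\e_{rs+2}$ assemble into a single frame of $\Sigma$ matching Construction \ref{const}. Concretely one needs the $\Pi_i$ to be in ``general position'' inside $H'$, the $r-1$ connecting points to complete them to all of $H'$, and the remaining point $\langle\e_{rs+1}-\alpha\e_{rs+2}\rangle$ to complete $H'$ to $\Pi$ (all of which rest on the disjointness of $\Pi$ from $\Sigma$ and $\Omega$), and one must verify that the lines of $\mathcal S$ are precisely the $\Pi$-lines produced by the construction. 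For the last point it is essential that \emph{all} points of $L$ other than the one of weight $w$ have weight exactly $2$: this is what forces the spans $\bar J\cap\Pi$ appearing in the proof of Theorem \ref{thrm} to be lines, pins down $\mathcal S$, and leaves no freedom for the $\beta_i$ or for $\e_{rs+1},\e_{rs+2}$ to deviate from the construction.
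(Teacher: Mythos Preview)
Your approach is essentially the paper's own: show that $(\Sigma,\Pi,\Omega)$ can be put in the form of Construction~\ref{const} by choosing a suitable frame of $\Sigma$, and then invoke Lemma~\ref{linearity}. The case split ($s>2$ via Lemma~\ref{lemdiv}/Remark~\ref{srem}, $s=2$ via Remark~\ref{2rem}) and the use of the Desarguesian $(s-1)$-spread on $H$ to produce the $H_i$ with their Singer bases all match the paper.

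Two small slips to fix. First, in the base case $k=4$ there is no ``complementary $3$-space'': $\Sigma=\PG(3,q)$ itself, and Construction~\ref{const} with $r=1$, $s=2$ uses two spread lines of the single $3$-space (one playing the role of $A_1$, the other of $B$). Second, and more importantly, the lines in $\mathcal S$ partition $\Sigma\setminus H$, so no $\ell_0\in\mathcal S$ meets $H$ at all; you should simply take any $\ell_0\in\mathcal S$, choose $\e_{rs+1},\e_{rs+2}$ on it with $\langle\e_{rs+1}-\alpha\e_{rs+2}\rangle\in\Pi$, and note that $span(H,\ell_0)=\Sigma$. The paper does exactly this, and then observes that $\Omega$ may be chosen freely (disjoint from $\Pi$), so one takes $\Omega=span(\langle\e_{r,s}\rangle,\langle\e_{rs+2}\rangle)$. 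Your justification that $\beta_i\notin\F_{q^s}$ is also a bit loose (being in $\F_{q^s}$ is not the same as having type $S_\alpha$ when $s>2$); in fact the paper does not verify this condition either, and it is not needed for Lemma~\ref{linearity}---the constraint in Construction~\ref{const} is only there to guarantee $\Pi\cap\Sigma=\emptyset$, which you already have by hypothesis.
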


\begin{proof}
	We prove the statement by showing that any linear set satisfying the conditions of Theorems \ref{thr} or \ref{thrm} admits a choice of basis vectors so that it can be obtained from Construction \ref{const}.
	
	If a linear set satisfies the conditions of Theorem \ref{thr} then similar to Lemma \ref{lemreg} we can take $\Sigma=\langle span( \u_1,\v_1,\u_2,\v_2)\rangle_q$ such that $\Pi=span(\langle\u_1-\alpha\v_1\rangle,\langle\u_2-\alpha\v_2\rangle)$. Now mapping the basis vectors $\u_1,\v_1,\u_2,\v_2$ to $ \e_1,\e_2,\e_3,\e_4$ respectively we immediately see that $L$ is obtained from Construction \ref{const} with $k=4,p=1,s=2$. Thus by Lemma \ref{linearity} this set has geometric field of linearity $\mathbb{F}_{q^2}$, and since it can be written as an $\F_{q^2}$-linear set with $q^2+1$ points, we find that it is indeed an $\F_{q^2}$-subline.

		If a linear set satisfies the conditions of Theorem \ref{thrm} then we have $\Sigma=span(H,\ell)$, where $\ell$ is a $\Pi$-line corresponding to a point of weight $2$, so there exist vectors $\w_{k-1}$ and $\w_{k}$ and an element $\alpha\in \F_{q^h}\setminus \F_q$ such that $\ell=span(\langle \w_{k-1}\rangle,\langle \w_{k}\rangle)$ and $\langle \w_{k-1}-\alpha \w_{k}\rangle\in\Pi$. 
		Let $[\F_{q}(\alpha):\F_q]=s$. If $s>2$ then it follows from Remark \ref{srem} and Lemma \ref{lemdiv}, that $H$ can be partitioned by $(s-1)$-dimensional subspaces which form a Desarguesian spread and that we can write $H=span(H_1, \dots, H_r)$ where $H_i=span(\langle \w_{i,1}\rangle,\langle\w_{i,2}\rangle,\ldots,\langle\w_{i,s}\rangle)$ such that $\langle \w_{i,1}-\alpha \w_{i,2}\rangle,\langle \w_{i,2}-\alpha \w_{i,3}\rangle,\dots,\langle \w_{i,s-1}-\alpha \w_{i,s}\rangle\in \Pi$. 
		
	By possibly relabeling the vectors $\w_{i,1}, \w_{i,2},\w_{i,s}$ whose corresponding points span each $H_i$, we can make sure that the set of $r$ vectors $\w_{j,s}$, $j=1,\ldots,r$ are linearly independent. The line joining $\langle \w_{i,s}\rangle$ and $\langle \w_{i+1,s}\rangle$, $i=1,\dots, r-1$ is a $\Pi$ line, say of type $\beta_{i}$ such that $\langle \w_{i,s}-\beta_{i}\w_{i+1,s}\rangle\in\Pi$. It follows that $H\cap \Pi$ equals $span( \Pi_1,\Pi_2,\dots,\Pi_r,\langle \w_{1,s}-\beta_1\w_{2,s}\rangle,\langle \w_{2,s}-\beta_2\w_{3,s}\rangle,\dots,\langle \w_{r-1,s}-\beta_{r-1}\w_{r,s}\rangle)$. It follows that $$\Pi=span( \Pi_1,\Pi_2,\dots,\Pi_r,\langle \w_{1,s}-\beta_1\w_{2,s}\rangle,\langle \w_{2,s}-\beta_2\w_{3,s}\rangle,\dots,$$ $$\langle \w_{r-1,s}-\beta_{r-1}\w_{r,s}\rangle,\langle \e_{rs+1}-\alpha\e_{rs+2}\rangle).$$
	
	Now mapping the basis vectors $\w_{{1,1}},\dots,\w_{{r,s}},\w_{{k-1}},\w_{k}$ to $\e_{1},\dots,\e_{k}$ we see that $\Pi$ and $\Sigma$ are as in Construction \ref{const}. Finally, we know that $\Omega$ can be taken arbitrarily but disjoint from $\Pi$, so we may take $\Omega$ to be $\Omega:=span(\langle \e_{r,s}\rangle,\langle\e_{rs+2}\rangle) .$ Note that the points $\langle \e_{r,s}\rangle$ and $\langle \e_{rs+2}\rangle$ lie in subspaces that are projected onto different points which means that $\Omega$ is indeed disjoint from $\Pi$.
	By Lemma \ref{linearity} this set has geometric field of linearity $\mathbb{F}_{q^s}$.
	
	If $s=2$ in Theorem \ref{thrm}, then by Remark \ref{2rem}, $H$ is partitioned by $\Pi$-lines of type $S_\alpha$ into a Desarguesian spread. Thus we can write $H=span( \ell_1,\dots,\ell_r)$, with $\ell_i=span(\langle \w_{{i,1}}\rangle,\langle\w_{{i,2}}\rangle)$ and $\langle \w_{{i,1}}-\alpha \w_{{i,2}}\rangle\in \Pi$. Arguing as above, the linear set is obtained by Construction \ref{const}.
	By Lemma \ref{linearity} this set has geometric field of linearity $\mathbb{F}_{q^2}$.
\end{proof}

\begin{remark}\label{notmax} It remains to indicate why we do not draw the conclusion that our set has {\em maximum} geometric field of linearity $\F_{q^s}$. First note that the parameter $s$ we find in the proof is determined by the type $S_\alpha$ of the lines corresponding to the points of weight $2$ in $L_U$.

When we take a subline $L=\PG(1,q^{k-2})$ in $\PG(1,q^h)$, then the maximum geometric field of linearity is clearly $\F_{q^{k-2}}$. We can write this set as $L_U$ where $U$ is an $\F_q$-vector space of rank $2k-4$ which is an $\F_{q^{k-2}}$-linear vector space.  Then $\mu=\langle U\rangle$ is a $(2k-5)$-dimension projective subspace of $\PG(2h-1,q)$, partitioned by $(k-3)$-spaces, each corresponding to a point of $L_U$. This forms a Desarguesian $(k-3)$-spread, say $\mathcal{D'}$, of $\mu$. Let $H$ be one of the $(k-3)$-dimensional spread elements of $\mathcal{D'}$.
Now for every proper divisor $s$ of $k-2$, we find a unique $(s-1)$-subspread of $\mathcal{D'}$  (see e.g. \cite{dye,spreads}), and the $\Pi$-lines of type $S_\alpha$ where $[\F_q(\alpha):\F_q]=s$ through a point of $H$ are entirely contained in an element of this $(s-1)$-subspread $\mathcal{D'}$. Vice versa, every line contained in a spread element of $\mathcal{D'}$ is a $\Pi$-line of type $S_\beta$ where $[\F_q(\beta):\F_q]=s$.

This means that if we take a subspace $V$ of $U$ spanned by $H$ and a line contained in one of the induced elements of the $(s-1)$-subspread in $\mu$, we will obtain that $L_U=L_V$ where $L_V$ satisfies the hypothesis of our theorem. But we will draw the conclusion that $L_V$ has geometric field of linearity $\F_{q^s}$, whereas the maximum field of linearity is $\F_{q^{k-2}}$. If we start however from a subspace $V'$ determined by $H$ and any line in a spread element of $\mathcal{D'}$ which is not contained in any of the induced subspreads, then the corresponding field element $\alpha$ generates $\F_{q^{k-2}}$ and our theorem will lead to the conclusion that $L_V'$ has geometric field of linearity $\F_{q^{k-2}}$.

Finally note that if $ls=k-2 \mid h$, an $\F_{q^s}$-linear set of size $q^{k-2}+1$ does not necessarily have maximum geometric field of linearity $\F_{q^{k-2}}$. If a linear set of size $q^{k-2}+1$ has maximum geometric field of linearity $\F_{q^{k-2}}$, then it is necessarily a subline $\PG(1,q^{k-2})$; but not all $\F_{q^s}$-linear sets of size $q^{ls}+1$ are $\F_{q^{k-2}}$-sublines. The easiest case to see this is for $\F_q$-linear sets of size $q^2+1$ in $\PG(1,q^4)$: some of those are $\PG(1,q^2)$-sublines while others are not (see also \cite[page 9]{jena}).
\end{remark}

\begin{corollary} Let $L_U$ be an $\F_q$-linear $i$-club of rank $k$ in $\PG(1,q^h)$, that is, $L_U$ has one point of weight $i$ and all others of weight $1$. If $L_U=L_V$ for some proper subspace $U$ of $V$, then $L_U$ has maximum field of linearity $\F_{q^s}$ for some $s>1$.
\end{corollary}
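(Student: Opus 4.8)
The plan is to replace the enlarged representation $L_V$ by one to which the Main Theorem (equivalently Proposition~\ref{coromain}) applies, and then simply quote it. Write $k=\dim_q U$ and $k'=\dim_q V$, so $k'>k$. Two degenerate situations can be dealt with at once: if $i=k$ then $L_U$ is a single point, which is $\F_{q^h}$-linear, and if $L_U$ is the whole line $\PG(1,q^h)$ then $L_U=L_{\F_{q^h}^2}$ is $\F_{q^h}$-linear; in both cases the maximum geometric field of linearity already contains $\F_{q^h}\supsetneq\F_q$. So assume $i<k$ and $L_U\ne\PG(1,q^h)$; since a linear set on a line of $\F_q$-rank at least $h+1$ is the whole line, every representation of $L_U$ then has $\F_q$-rank at most $h$, in particular $k\le h$.

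Next I would pin down the weights coming from an enlarged representation. Since $L_U$ is an $i$-club with $i<k$ we have $|L_U|=1+\tfrac{q^k-q^i}{q-1}$, and $\tfrac{q^k-q^i}{q-1}<q^k$ by an elementary estimate, so $|L_U|<q^k+1\le q^{k'-1}+1$. Applying Result~\ref{ondergrens} (in contrapositive form) to the rank-$k'$ representation $L_V$ shows that $L_V$ has no point of weight $1$; that is, in the representation $L_V$ every point of the set $L_U=L_V$ has weight at least $2$.

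Now choose an $\F_q$-subspace $W\subseteq V$ of minimal dimension $m$ subject to $L_W=L_U$ and all points of $L_W$ having weight at least $2$; the previous paragraph shows $W=V$ qualifies. Then $4\le m\le h$ and $m>k$: the bound $m\le h$ is the reduction above; $m\ge4$ since $L_U$ has at least two points, each of weight $\ge2$ in $L_W$; and $m>k$ because a linear set of $\F_q$-rank at most $k$ with no point of weight $1$ has at most $\tfrac{q^k-1}{q^2-1}$ points, while $|L_U|=1+\tfrac{q^k-q^i}{q-1}>\tfrac{q^k-1}{q^2-1}$. The crux is to show that $L_W$ has exactly one point $P_0$ of weight $w\ge 2$ and all of its other points of one common weight $m-w\ge2$ (for the subline of Example~\ref{deelrechte} this is realised with $m=k+1$, $w=i$ and all other points of weight $2$). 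Granting this, the Main Theorem applied to $L_W$ yields that $L_W$ has geometric field of linearity $\F_{q^s}$ for some $s>1$; since $L_U=L_W$ this means $L_U$ has geometric field of linearity $\F_{q^s}$, and hence maximum geometric field of linearity $\F_{q^{s'}}$ for some $s'\ge s>1$. It is the \emph{geometric} field of linearity that is at issue, since the presence of a point of weight $1$ in $L_U$ makes $U$ strictly $\F_q$-linear.

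The main obstacle is exactly the crux above: showing that a minimal representation of $L_U$ with no point of weight $1$ has the single-exceptional-point shape required by the Main Theorem. I would attack it with minimality and the projection picture of Section~2: if $R\in L_W$ had weight $w_R\ge3$ and the spaces $W_Q$, $Q\ne R$, failed to span $W$, a hyperplane of $W$ containing all of them and meeting $W_R$ in codimension~$1$ would give a strictly smaller admissible representation, contradicting minimality; analysing which configurations survive, and feeding in the numerical constraints $\sum_{Q\in L_W}(q^{w_Q}-1)=q^m-1$ and $|L_W|=|L_U|$, should force the weights into the required form. Converting the spanning condition on the $W_Q$ into the regulus-and-spread structure that makes the associated $\Pi$-lines share a common type $S_\alpha$ is precisely what Lemmas~\ref{lemtyp}, \ref{lemdiv}, \ref{lemreg} are built for, just as in the proofs of Lemma~\ref{thr} and Theorem~\ref{thrm}.
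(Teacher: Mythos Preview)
Your crux step is not merely hard to establish: for $i<k-1$ it is \emph{false}. A linear set of rank $m$ with one point of weight $a$ and all remaining points of the same weight $m-a$ has exactly $1+q^{a}$ points (from $\sum_{P}(q^{w_P}-1)=q^{m}-1$, since the $q^{m-a}$ ``other'' points contribute $q^{a}(q^{m-a}-1)$). But
\[
|L_U|\;=\;1+\frac{q^k-q^i}{q-1}\;=\;1+\bigl(q^{k-1}+q^{k-2}+\cdots+q^{i}\bigr),
\]
and this equals $1+q^{j}$ for some $j$ only when the sum has a single term, i.e.\ $i=k-1$. Hence for $i<k-1$ there is \emph{no} representation $L_W=L_U$, of any rank, with the single-exceptional-weight shape required by the Main Theorem, so your minimality-plus-$\Pi$-line programme cannot reach its target. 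Your earlier steps (the degenerate reductions, the use of Result~\ref{ondergrens} to kill weight-$1$ points in $L_V$, and the bounds $4\le m\le h$, $m>k$) are correct; it is the shape you are aiming for that is unattainable.

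The paper's proof takes a much more direct route: choose any $(k{+}1)$-dimensional $W$ with $U<W\le V$, assert that $L_W$ has one point of weight $i$ and the rest of weight $2$, and quote Proposition~\ref{coromain}. Note that Proposition~\ref{coromain} applied to a rank-$(k{+}1)$ set needs the exceptional weight to be $k-1$, so this argument likewise only covers $i=k-1$; and the asserted weight pattern itself is consistent only when $i=k-1$, by the same counting (equating $(q^{k+1}-q^{i})/(q^{2}-1)$ with $(q^{k}-q^{i})/(q-1)$ forces $k=i+1$). In that case the paper's one-step enlargement is the right move and your minimality detour buys nothing: passing from $U$ to a one-larger $W$ raises each weight by at most $1$, and $\sum_P\bigl(q^{w_P(W)}-q^{w_P(U)}\bigr)=q^{k}(q-1)$ forces exactly $q^{k-1}$ jumps, which for $i=k-1$ is precisely all the non-head points once the head's weight is kept fixed. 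Your observation that the conclusion should read \emph{geometric} field of linearity (since $U$ itself is strictly $\F_q$-linear) is well taken.
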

\begin{proof} Let $W$ be a $(k+1)$-dimensional subspace with $U<W\leq V$. We see that $L_W$ has one point of weight $i$ and all other points of weight $2$, and the statement follows from Proposition \ref{coromain}.
\end{proof}

We are now ready to prove the main theorem.
\begin{theorem}\label{main}
	If $L$ is a linear set of rank $k$, $4\le k\le h$, in $\PG(1,q^h)$ with one point $P$ of weight $k-w\ge 2$ and all other points of the same weight $w\ge 2$ then $L$ has geometric field of linearity $\mathbb{F}_{q^s}$ with $s\mid k-w$, $s>1$, $s\mid h$, and $s\geq w$.
\end{theorem}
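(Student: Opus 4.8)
The plan is to reduce the statement to the case $w=2$, which is Proposition~\ref{coromain}, and then to extract the extra inequality $s\ge w$ from the rigidity forced by the two descriptions of $L$ produced by the reduction.

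\emph{The reduction.} Write $L=L_U$ with $\dim_q U=k$, let $U_P$ be the $(k-w)$-dimensional $\mathbb{F}_q$-fiber of $P$, and let the $U_Q$ ($Q\neq P$) be the $w$-dimensional fibers of the remaining $q^{k-w}$ points. Each $U_Q$ meets $U_P$ only in $0$, so the quotient map $U\to U/U_P$ is injective on $U_Q$; since $\dim_q(U/U_P)=w=\dim_q U_Q$ it restricts to an isomorphism of $U_Q$ onto $U/U_P$. Now choose \emph{any} $\mathbb{F}_q$-subspace $U'$ with $U_P\le U'\le U$ and $\dim_q U'=k-w+2$. For every $Q\neq P$ the image of $U'\cap U_Q$ in $U/U_P$ lies in the $2$-dimensional subspace $U'/U_P$ and, by the previous remark, equals all of it, so $\dim_q(U'\cap U_Q)=2$; in particular $U'$ meets every fiber, whence $L_{U'}=L_U=L$. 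Thus $L$ is a linear set of rank $k-w+2$ (with $4\le k-w+2\le h$) having one point $P$ of weight $(k-w+2)-2=k-w$ and all other points of weight $2$, so Proposition~\ref{coromain} applies and gives a geometric field of linearity $\mathbb{F}_{q^s}$ of $L$ with $s\mid k-w$, $s>1$ and $s\mid h$. (Note also that two of the disjoint $(w-1)$-dimensional fibers already sit inside $\PG(k-1,q)$, forcing $2(w-1)+1\le k-1$, i.e.\ $k-w\ge w$, so the condition $s\ge w$ is consistent with $s\mid k-w$.)

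\emph{Obtaining $s\ge w$.} Applying Construction~\ref{const} and Lemma~\ref{linearity} to $L_{U'}$ makes this $\mathbb{F}_{q^s}$ explicit: there is an $\mathbb{F}_{q^s}$-linear space $W$ with $L_W=L$ realising $L$ as an $r$-club over $\mathbb{F}_{q^s}$ with $rs=k-w$, so that $\dim_q W=k-w+s$, and with respect to $W$ the point $P$ has $\mathbb{F}_q$-weight $k-w$ while every other point has $\mathbb{F}_q$-weight exactly $s$. Tracing through the identification in Lemma~\ref{linearity} one checks that $U'\le W$; since $U_P=(U')_P\le W\cap\langle u_P\rangle_{q^h}$ and both spaces have $\mathbb{F}_q$-dimension $k-w$, one even gets $W\supseteq U_P$. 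As $U=U_P+\sum_{Q\neq P}U_Q$, it now suffices to prove that $U_Q\subseteq W\cap\langle u_Q\rangle_{q^h}$ for every $Q\neq P$: this gives $U\le W$ and hence $k=\dim_q U\le\dim_q W=k-w+s$, i.e.\ $s\ge w$, which completes the proof.

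\emph{The containment $U_Q\subseteq W\cap\langle u_Q\rangle_{q^h}$, and the main obstacle.} I would argue in the field-reduction picture in $\PG(2h-1,q)$, where $L$ is read off both from $\langle U\rangle_q$ and from $\langle W\rangle_q$, the latter being a union of elements of the Desarguesian $(s-1)$-subspread $\mathcal{D}_s$ of $\mathcal{D}$ and containing $\langle U'\rangle_q$. For $Q\neq P$ the fiber $W\cap\langle u_Q\rangle_{q^h}$ is a single element $E_Q$ of $\mathcal{D}_s$ inside $\phi(Q)$, and it contains the line $\phi(Q)\cap\langle U'\rangle_q$ through which $Q$ is seen in $L_{U'}$; by Proposition~\ref{coromain} that line is of type $S_\alpha$ with $[\mathbb{F}_q(\alpha):\mathbb{F}_q]=s$, hence lies in a \emph{unique} element of $\mathcal{D}_s$, so $E_Q$ is that element. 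It therefore suffices to show that the $(w-1)$-dimensional projective fiber $H_Q$ of $Q$ (equivalently $U_Q$) is contained in a single element of $\mathcal{D}_s$: that element then contains $\phi(Q)\cap\langle U'\rangle_q$ and so equals $E_Q$. To prove this I would analyse the $\Pi$-lines contained in $H_Q$ --- distinct fibers project to distinct points, so for $Q_1\neq Q_2$ the span of $H_{Q_1}$ and $H_{Q_2}$ meets $\Pi$ in a line, which lets one apply Lemma~\ref{lemreg} to control the types that occur, and then Lemma~\ref{lemdiv} together with the Desarguesian-subspread description of Remark~\ref{srem} forces $H_Q$ inside an $(s-1)$-dimensional element of $\mathcal{D}_s$. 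This alignment step is the genuine difficulty: weights are representation-dependent, so Lemma~\ref{triv} cannot be invoked directly; one has to exploit the rigidity of having two representations $U$ and $W$ of the \emph{same} point set that share $U'$ and $U_P$, together with the spread geometry governing the $\Pi$-lines.
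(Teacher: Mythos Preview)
Your reduction to the case $w=2$ is correct and is exactly what the paper does: pass to a subspace $U'$ with $U_P\le U'\le U$ of $\mathbb{F}_q$-dimension $k-w+2$, observe that every fibre $U_Q$ meets $U'$ in a $2$-dimensional space, and apply Proposition~\ref{coromain} to obtain some $s>1$ with $s\mid k-w$ and $s\mid h$.

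The gap is in your argument for $s\ge w$. You try to show that, for the \emph{specific} $s$ produced by your one choice of $U'$, the fibre $U_Q$ is contained in a single element $E_Q$ of the Desarguesian $(s-1)$-subspread $\mathcal{D}_s$, and hence $U\subseteq W$. This can simply be false. As Remark~\ref{notmax} explains, the value of $s$ coming out of Proposition~\ref{coromain} depends on the type $S_\alpha$ of the chosen line $\sigma'_Q$, and different choices of $U'$ give different~$s$. For a concrete failure, take $L=\PG(1,q^4)\subset\PG(1,q^8)$, so $k=8$, $w=k-w=4$; choosing the $2$-dimensional slice $D\subset U/U_P$ to be a copy of $\mathbb{F}_{q^2}$ forces the line $\sigma'_Q$ into an element of $\mathcal{D}_2$, hence $s=2$, while $\sigma_Q$ is a $3$-space and cannot fit inside a line of $\mathcal{D}_2$. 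So $U\not\subseteq W$ and your containment step cannot be completed for this~$U'$. (Your auxiliary claim that $span(H_{Q_1},H_{Q_2})$ meets $\Pi$ in a line is also off: a dimension count gives a $(2w-3)$-space, which is a line only when $w=2$.)

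The paper circumvents this by \emph{iterating}: if the $s$ from one $U'$ gives a subspread element $\tau\subset\phi(Q)$ that does not yet contain $\sigma_Q$, pick a point $R\in\sigma_Q\setminus\tau$, build a new $U''$ using a line through $R$ and a point of $\sigma'_Q$, get a new $s'\ne s$, and pass to $t=\mathrm{lcm}(s,s')$ via $X=span(W,W')$, which is $\mathbb{F}_{q^t}$-linear with $L_X=L$. Repeating enlarges the subspread element until it swallows $\sigma_Q$, at which point the exponent is at least $w$. Your sketch is missing precisely this climbing mechanism; the single $W$ you construct need not contain $U$.
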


\begin{proof} We switch to the representation of linear sets in terms of Desarguesian spreads.
	Recall that the linear set $L=L_U$ can be viewed as the pre-image of a $(k-1)$-dimensional subspace $\sigma=\langle U\rangle$ of $\PG(2h-1,q)$, under the field reduction map $\phi:\PG(1,q^h)\rightarrow\PG(2h-1,q)$. As always, let $\mathcal{D}$ denote the $(h-1)$-spread determined by $\phi$.
	

	Let $\phi(P)\cap\sigma=\sigma_P$, then $\sigma_P$ is a $(k-w-1)$-dimensional subspace of $\PG(2h-1,q)$ since $P$ has weight $k-w$. Let $Q\neq P$ be a point of $L_U$, then $\phi(Q)\cap \sigma=\sigma_Q$ is $(w-1)$-dimensional. Now consider a $(k-w+1)$-dimensional subspace $\sigma'$ of $\sigma$ containing $\sigma_P$, then $\sigma'=\langle V\rangle$ with $V<U$ and $L_U=L_V$.
	Since $\sigma'\cap \phi(P)=\sigma_P$, $P$ has weight $k-w$ in $L_V$; moreover, it follows that $\phi(Q)\cap \sigma'=\sigma'_Q$ is a line, and hence, $Q$ has weight $2$ in $L_V$.  This implies that
	$L_V$ is an $\F_q$-linear set of rank $k-w+2$ with one point of weight $k-w$ and all other points of weight $2$. Hence by Proposition \ref{coromain} we see that $L_V$, and hence also $L_U$, has geometric field of linearity $\F_{q^s}$ with $s\mid k-w$.
	
	The only thing left to prove is that $L_U$ has geometric field of linearity $\F_{q^{s'}}$ for some $s'\geq w$.
	In the proof of Proposition \ref{coromain}, we have deduced that $L_V=L_W$ where $W$ is $\F_{q^s}$-linear, has $\F_q$-dimension $k-w+s$ and $\langle W\rangle$ is spanned by $\sigma_P$ and an $(s-1)$-dimensional subspace $\tau$ of $\phi(Q)$ containing the line $\sigma'_Q$. Recall that $\sigma_Q$ is $(w-1)$-dimensional. Hence, if $\tau$ contains $\sigma_Q$, then $s-1\geq w-1$ and we are done.
	
	So suppose that there is a point of $\sigma_Q$, say $R$, not contained in $\tau$. Consider a line $m$ through $R$ and a point of $\sigma'_Q$. Let $V'$ be a $(k-w+2)$-dimensional vector subspace such that $\langle V'\rangle=span(\sigma_P,m)$. Then $L_U=L_{V'}$, and, $L_{V'}$ has one point of weight $k-w$ and all others of weight $2$. It follows that $L_{V'}$ has geometric field of linearity $\F_{q^{s'}}$ for some $s'\mid k-w$. Furthermore, as before, $m$ is contained in an $(s'-1)$-dimensional subspace $\tau'$ of $\phi(Q)$ such that there exists an $\F_{q^{s'}}$-linear subspace $W'$ of $\F_q$-dimension $k-w+s'$ with $\langle W'\rangle=span(\sigma_P,\tau')$. It follows that $L_{V'}=L_{W'}$, and furthermore, if we put $X=span(W,W')$, then it is clear that $L_X=L_W=L_{W'}=L_{V'}=L_U$.
	

	The $(s-1)$-space $\tau$ through $R$ is an element of the unique Desarguesian $(s-1)$-subspread of $\mathcal{D}$ while the $(s'-1)$-space $\tau'$ through $R$ is an element of the unique Desarguesian $(s'-1)$-subspread of $\mathcal{D}$. Since $\tau\neq\tau'$, we have that $s\neq s'$, and furthermore, $span(\tau,\tau')$ is contained in the unique Desarguersian $(t-1)$-spread of $\mathcal{D}$ where $t=lcm(s,s')$. This in turn implies that $X$ is $\F_{q^t}$-linear so $L_U$ has geometric field of linearity $t$. 
	
	If $t< w$, then there is a point $R'$ of $\sigma_P$, not contained in $span(\tau,\tau')$ and we can repeat the reasoning above to find that $R'$ lies in an element of the Desarguesian $(s''-1)$-subspread $\tau''$ with $s''\neq t$,  $L_U$ can be written as $L_{X'}$ where $X'$ is $\F_{q^{t'}}$-linear with $t'=lcm(t,s'')$. Eventually, we fill find that all points of $\sigma_P$ are contained in an element $\tau''$ of a Desarguesian $(t''-1)$-spread and that $L_U=L_{X''}$ where $X''$ is $\F_{q^{t''}}$-linear. Since $\sigma_P$ is contained in $\tau''$, we find that so $t''-1\geq w-1$ and we are done.
	
	\end{proof}

\section{Conclusion} 

	In this paper we have shown that if a rank $k$ $\mathbb{F}_q$-linear set on a line $\PG(1,q^h)$, $k\leq h$, has one point of weight $k-w$ and all others of weight $w$, then $L$ has geometric field of linearity $\mathbb{F}_{q^s}$ for some $1<s$, $s\mid k-w$, $s\geq w$. 
	 As indicated in Remark \ref{club}, these linear sets can be viewed as $\F_{q^s}$-linear clubs or $\F_{q^s}$-sublines, which are in some sense, the `easiest' types of linear sets.
	\paragraph{Main open problem}
	The larger question of whether the same conclusion can be obtained for all linear sets without points of weight one, remains unsolved. Although we believe that this should indeed be the case, the methods developed in this paper are insufficient to tackle this question. 
	 
	 The case that we have studied in this paper has the advantage that the field of linearity we are looking for (say $\F_{q^s}$) necessarily has $s$ dividing $k-w$, where $k-w$ is the weight of the unique point, different from all the ones of weight $w$ appearing in the set. In general, if we take an $\F_{q^s}$-linear set $L_U$, where $U$ has rank $ls$ and consider a subspace $V$ of $U$ of dimension at least $ls-s+2$, all points of $L_V$ will have weight at least $2$. However, there is no longer an obvious way to deduce the value of $s$ from the weights of the points in $L_V$.

	 \paragraph{A geometric point of view using subspreads}
	Geometrically speaking, the belief we indicated above is equivalent to the following: whenever a subspace $\mu$ meets all elements of a Desarguesian spread $\mathcal{D}$ in either $0$ points or in at least a line, the partition of $\mu$ induced by the elements of $\mathcal{D}$ consists of parts, each of which are contained in some fixed Desarguesian subspread of $\mathcal{D}$ (induced by a subfield). We have shown in this paper that the above statement indeed holds true if the partition consists of one $(k-w-1)$-space and all other parts are $(w-1)$-spaces.
	 
	 \paragraph{The rank of the linear set} We have given examples of linear sets whose rank is not uniquely defined, in the sense that this linear set can be written as $L_U$ and $L_V$ with $U,V$ subspaces of different dimensions. If $L_U=L_V$ where $U<V$, then it is clear that all points of $L_V$ have weight at least $2$. As indicated above, we believe that this should imply that $L_U$ has maximum geometric field of linearity $\F_{q^s}$ for some $s>1$. However, since not every linear set is simple, it does not immediately follow from $L_U=L_V$ with $dim(U)<dim(V)$ that there exists a subspace $W$ with $dim(V)=dim(W)$, $U<W$ and $L_U=L_W$. In other words, it would be interesting to see whether $L_U=L_V$ for $U,V$ subspaces with $dim(V)>dim(U)$ always implies that all points of $L_V$ have weight at least two.

	 \paragraph{A polynomial point of view}
	 Considerable effort has been done in recent years to investigate linear sets from a (linearised) polynomial point of view -- every rank $h$ linear set in 
	 $\PG(1,q^h)$ can be described through such a polynomial as a set determined by the points $\langle(x,f(x))\rangle$, where $x$ ranges in $\F_{q^h}$. For a recent result linking linearised polynomials with the weights of the point sets they define, see \cite{fat}.
	 
	 But as indicated in the introduction (see Results \ref{prop22}), for linear sets of rank $h$ in $\PG(1,q^h)$, it is not only known that if they contain only points of weight at least $2$, the linear set have geometric field of linearity $\F_{q^s}$. They are, in fact, already $\F_{q^s}$-linear themselves.
	While it is possible to describe a linear set of rank $k<h$ as a set $\langle(x,f(x))\rangle$ where $x$ ranges in a subspace of $\F_{q^h}$, it seems that the fact that $x$ does not range in the entire field makes a polynomial approach (much) more complicated. 
	
	In \cite{takats}, the authors study the direction problem for point sets of size less than $q$ and prove a statement which resembles Theorem \ref{th:graph1}. However, we have seen that we cannot hope to extend the result of Proposition \ref{prop22} to all linear sets. This is in part indicated by the fact that in the more general theorem of \cite{takats}, the authors have to take into account two different parameters, $s$ and $t$. These parameters will essentially correspond to the algebraic and geometric field of linearity of the point sets if we look at the directions determined by an $\F_q$-linear map defined on a subspace of $\F_{q^h}$.

\end{document}